\begin{document}
\headers{Energy stable schemes for gradient flows based on the DVD method}{Jizu Huang}

\title{Energy stable schemes for gradient flows based on the DVD method
}

\author{Jizu Huang\thanks{LSEC, Academy of Mathematics and Systems Science, Chinese Academy of Sciences, Beijing 100190, China, and School of Mathematical Sciences, University of Chinese Academy of Sciences, Beijing 100049, China
		(\email{huangjz@lsec.cc.ac.cn}). The work was supported by the NSFC 11871069 and Strategic Priority Research Program of the Chinese Academy of Sciences XDC06030101.}
}

\maketitle

\begin{abstract}
The existing discrete variational derivative method is only second-order accurate and fully implicit. In this paper, we propose a framework to construct an arbitrary high-order implicit (original) energy stable scheme and a second-order semi-implicit (modified) energy stable scheme. Combined with the Runge--Kutta process, we can build an arbitrary high-order and unconditionally (original) energy stable scheme based on the discrete variational derivative method. The new energy stable scheme is implicit and leads to a large sparse nonlinear algebraic system at each time step, which can be efficiently solved by using an inexact Newton type algorithm.  To avoid solving nonlinear algebraic systems, we then present a relaxed discrete variational derivative method, which can construct second-order, linear, and unconditionally (modified) energy stable schemes. Several numerical simulations are performed to investigate the efficiency, stability, and accuracy of the newly proposed schemes.
\end{abstract}

\begin{keywords}
gradient flows, unconditional energy stability,  discrete variational derivative method, phase field models
\end{keywords}

\begin{AMS}
65M12, 35K20, 35K35, 35K55, 65Z05
\end{AMS}

	\section{Introduction}\label{sec-intro}
	
	Many physical problems, such as interface dynamics \cite{anderson1998diffuse, liu2003phase, lowengrub1998quasi}, 
	crystallization \cite{elder2002modeling, elder2007phase, wei2019parallel}, thin  films \cite{giacomelli2001variatonal},  
	polymers \cite{fraaije1997dynamic, fraaije1993dynamic, maurits1997mesoscopic}, binary alloys \cite{cahn1994evolution, huang2020parallel, zhang2016extreme}, 
	and liquid crystals \cite{forest2004weak, leslie1979theory}, 
	can be modeled by gradient flow type partial differential equations (PDEs). 
	Gradient flows  are driven by free energy functional and can be written in the following general form:
	\begin{equation}
	\label{system-1}
	\frac{\partial \phi}{\partial t} = {\cal G}\frac{\delta{\cal E}}{\delta \phi}\quad \hbox{in}~\Omega\times(0, {T}],
	\end{equation}
	where $\phi:=\phi(\mathbf x,t)$, $\Omega\in\mathbb{R}^d$ with $d=1,\,2,$ or $3$ is the computational domain, $\cal E[\phi]$ is a given total free energy functional, and $\frac{\delta{\cal E}}{\delta \phi}$ is the variational derivative. 
	The free energy functional usually can be written explicitly as
	\begin{equation}
	\label{free-energy}
	{\cal E}[\phi]:=\frac {\gamma}2  \left<\nabla \phi,\,\nabla \phi\right>+{\cal E}_1(\phi),
	\end{equation}
	where $\left<\varphi,\,\psi\right>=\int_\Omega \varphi\psi\textnormal{d} \mathbf x$, $\gamma$ is an  interface energy coefficient, and ${\cal E}_1(\phi)$ is a body free energy functional.
	To simplify the presentation, we  only consider the well-known  Ginzburg–Landau free energy functional
	\begin{equation}
	\label{local-free-energy}
	{\cal E}_1(\phi)=\left<{ E}_1(\phi)\right>:=\int_\Omega \frac 1 {4\epsilon^2}(1-\phi^2)^2\hbox{d}\mathbf x,
	\end{equation}
	where $\epsilon$ is a small parameter. 
	
	In \eqref{system-1}, the operators ${\cal G}$ are  set as $-I$, $\Delta$, and $-(-\Delta)^\alpha$ with $(0<\alpha<1)$  for dissipation mechanisms including $L^2$, $H^{-1}$, and $H^{-\alpha}$ gradient flows, respectively. Assuming ${\cal G}$ is nonpositive, the gradient flow system satisfies the following energy dissipation law:
	\begin{equation}
	\frac{\textnormal{d}\cal E[\phi]}{\textnormal{d} t}=\left<\frac{\delta{\cal E}}{\delta \phi}\cdot \frac{\partial \phi}{\partial t} \right>
	=\left<\frac{\delta{\cal E}}{\delta \phi},\, {\cal G}\frac{\delta{\cal E}}{\delta \phi}\right>\leq 0.
	\end{equation}
	To simplify the presentation, we use periodic boundary conditions or homogeneous Neumann boundary conditions
	such that all boundary terms will vanish when integration by parts is performed. 
	
	For a gradient flow, it is very important to construct an efficient and accurate numerical method, which preserves the energy dissipation law at the discrete level.
	There exist several popular approaches, such as the convex splitting method \cite{baskaran2013convergence, elliott1993global,  eyre1998unconditionally, shen2012second}, 
	the stabilization method \cite{shen2010numerical, zhu1999coarsening}, the
	exponential time differencing \cite{ju2015fast, zhang2016extreme}, the invariant energy quadratization (IEQ) \cite{yang2016linear, zhao2017numerical}, the scalar auxiliary variable (SAV) \cite{shen2018scalar, shen2019new}, the discrete variational derivative (DVD) method \cite{du1991numerical, huang2020parallel, huang2022parallel, DVDM, PFCDVD}, and so on.
	Most of them are implicit and semi-implicit second-order schemes. 
	In contrast to fully implicit methods, semi-implicit methods only require solving a linear system at each time step, which is easier to implement and less computationally expensive.  
	In recent years, many researchers focus on designing higher-order energy stable schemes, such as \cite{akrivis2019energy, gong2020arbitrarily, gong2020arbitrarily, lee2022high, shin2017convex, yang2022arbitrarily}.
	Without any additional assumptions, the DVD method, originally proposed in \cite{du1991numerical}, can build a second-order implicit scheme for gradient flows with complicated total free energy functional, such as total free energy functional of logarithmic type \cite{huang2020parallel} and including elastic energy \cite{huang2022parallel}. 
	However, the construction of implicit energy stable schemes of order higher than 2 or semi-implicit linear schemes  based on the DVD method has remained open.

	For general gradient systems, usually written as ordinary differential equations, the discrete gradient method is proposed to preserve the first integral \cite{dahlby2011preserving, hairer2014energy, mclachlan1999geometric, norton2013discrete}. 
	By combining the discrete gradient method with the implicit Runge--Kutta process, an arbitrary higher-order scheme can be established, which preserves the first integral.
	In this paper, by combining the DVD method and  the implicit Runge--Kutta process, we present a framework to construct an arbitrary higher-order numerical scheme for the general phase field systems \eqref{system-1}. 
	Since the gradient flow is a PDE system satisfying energy dissipation law, the framework based on the DVD method is different from the discrete gradient method.
	In the DVD method, the discrete variational derivative $\mu[\phi_i,\phi_j]$ for two discrete total free energy functionals ${\cal E}^i:={\cal E}[\phi_i]$ and ${\cal E}^j:={\cal E}[\phi_j]$ is defined as a solution of the following equation
	\begin{equation}
	\label{eq:1}
	 {\cal E}^i-{\cal E}^j=\left<\phi_i-\phi_j,\,\mu[\phi_i,\phi_j]\right>,
	\end{equation}
	where $\phi_i,\,\phi_j$ are two solutions of gradient flow \eqref{system-1} at different times.
	With the definition of the discrete variational derivative, we can construct an arbitrary higher-order numerical scheme by following the idea of the implicit Runge--Kutta process.
	The newly proposed scheme can be proved to be unconditionally stable and satisfy the energy dissipation law if a sufficient condition is satisfied. 
	Due to the nonlinearity of the free energy functional ${\cal E}$, the arbitrary higher-order numerical scheme built by using the framework is fully implicit, which is hard to implement and computationally expensive compared to linear schemes.
	To construct a semi-implicit linear scheme, we should choose the discrete variational derivative $\mu[\phi_i,\phi_j]$ as a linear functional of $\phi_i$. 
	By  relaxing the formula \eqref{eq:1}, we propose a relaxed DVD method to build a semi-implicit linear scheme. 
	As compared with  the IEQ method and the SAV method, the newly proposed relaxed DVD method is more efficient.
	
	The remainder of this paper is organized as follows. 
	In Sec. 2, we introduce a framework to  construct an arbitrary higher-order implicit unconditionally energy stable scheme. 
	In Sec. 3, we propose the relaxed DVD method to build semi-implicit linear scheme. 
	In Sec. 4, we discuss the fully discrete scheme and corresponding solvers. 
	Several numerical simulations are reported in Sec. 5 and the paper is concluded in Sec. 6.

	\section{A framework for constructing high-order energy stable schemes}
	In this section, we introduce a framework to  construct an arbitrary higher-order, implicit, and unconditionally energy stable scheme for general gradient flows \eqref{system-1} by combining the  the DVD method and  the implicit Runge--Kutta process.
	It is important to point out that the framework proposed in this section is different from the discrete gradient method  \cite{dahlby2011preserving, hairer2014energy, mclachlan1999geometric, norton2013discrete}.
	
	\subsection{ Runge--Kutta Processes}
	In this subsection, we briefly review the  Runge--Kutta processes \cite{butcher1963coefficients, butcher2016numerical}, which can construct a temporal discretization of the gradient flow system \eqref{system-1}.
	Let us rewrite the gradient flow system  \eqref{system-1} to be an  initial value problem specified as follows:
	\begin{equation}
	\label{system-2}
	\frac{\partial \phi}{\partial t} = {\cal G}\frac{\delta{\cal E}}{\delta \phi}:=f(\phi,t)\quad \hbox{with}~\phi(t_0)=\phi_0.
	\end{equation}	
	For a given time step size $h>0$, a $\nu$ stage Runge--Kutta Process obtains an approximate solution $\phi_{\nu}$ at $t=t_0+h$  by using the following equations:
	\begin{equation}
	\label{system-3}
	\left\{
	\begin{aligned}
	 \phi_{\nu}&= \phi_{0}+ h \sum\limits_{i=1}^\nu b_i   k_{i},\\
	 k_i&=f\left(t_0+c_i h,\phi_0+ h\sum\limits_{j=1}^\nu a_{ij} k_j\right)\qquad i=1,\,2,\,\cdots,\, \nu,\\
	\end{aligned}
	\right.
	\end{equation}	
	where $a_{ij},\,b_i$, $(i,\,j=1,\,2,\,\cdots,\, \nu)$ are numerical constants. For convenience we shall designate the process by an array as the following Butcher tableau:
	\begin{equation}
	\begin{array}{cccc|c}
 	 a_{11} & a_{12}  & \cdots &a_{1\nu}  & c_{1} \\
 	 a_{21} & a_{22}  & \cdots &a_{2\nu}  & c_{2} \\
 	 \vdots & \vdots  &  &\vdots & \vdots \\
 	 a_{\nu1} & a_{\nu2}  & \cdots &a_{\nu\nu}  & c_{\nu} \\
  	\hline
  	b_1 & b_2 & \cdots & b_\nu&
	\end{array}
	\end{equation}
	with $c_i=\sum\limits_{j=1}^\nu a_{ij} $. 
	The Runge--Kutta Process \eqref{system-3} is called ``semi-implicit" if $a_{ij}=0$ for $i<j$.
	In particular, if  additionally $a_{ii}=0$, it is called ``explicit". In contrast to the ``semi-implicit" and ``explicit" processes, the Runge--Kutta Process \eqref{system-3} is called ``implicit" if there exists $a_{ij}\neq 0$ for $i<j$.
	The numerical constants $a_{ij},\,b_i$, and $c_i$  can be determined by following the integration process of evaluating $\int\limits_{t_0}^{t_{0}+c_ih}f(\phi)\textnormal d t$, such as the Gauss--Legendre quadrature formulae and the Newton--Cotes formulae \cite{quarteroni2010numerical}. 
%
%
%
%
	
	\subsection{An arbitrary high-order and unconditionally energy stable  DVD scheme}
	It is easy to build an arbitrary high-order scheme for the gradient flow   \eqref{system-1} by using the Runge--Kutta process.  
	However,  most of them are not unconditionally energy stable. 
	To overcome the difficulty, we propose a framework to construct arbitrary high-order and unconditionally energy stable schemes based on the DVD method.
	The DVD method was originally proposed in \cite{du1991numerical}, and then used to solve phase field crystal equation \cite{PFCDVD}, AC/CH equations \cite{huang2020parallel}, and Ni-based alloys system \cite{huang2022parallel}.
	The DVD methods in \cite{du1991numerical, huang2022parallel, huang2020parallel, du1991numerical}  are only second-order accurate in time.
	It is non-trivial  to construct a high-order scheme by using the DVD method.
	To do it, let us first introduce some necessary notations and definitions. 
	For given constants $0\leq c_i\leq1$ $(i=1,\,\cdots,\, \nu)$, let us denote $\phi_i\approx \phi(\mathbf{x},\,t_0+c_i h)$ as the approximate solutions for the gradient flow   \eqref{system-1}  at $t=t_0+c_i h$. The corresponding discrete total free energy is denoted as ${\cal E}^i:={\cal E}[\phi_i]$.
	
	For two discrete total free energy functionals ${\cal E}^i$ and  ${\cal E}^j$, we define the discrete variational derivative $\mu[\phi_i,\phi_j]$  as a solution of the following equation
	\begin{equation}
	\label{d-v-d}
	 {\cal E}^i-{\cal E}^j=\left<\phi_i-\phi_j,\,\mu[\phi_i,\phi_j]\right>.
	\end{equation}
	According to the definition of total free energy \eqref{free-energy}, we can derive the discrete variational derivative $\mu[\phi_i,\phi_j]$ as
	\begin{equation}
	 \mu[\phi_i,\phi_j]=-\frac \gamma 2 \Delta\left( \phi_i+ \phi_j\right)+E_1\{\phi_i,\phi_j\},
	\end{equation}
	where $ E_1\{\phi_i,\phi_j\}:= \frac{E_1[\phi_i]-E_1[\phi_j]}{\phi_i - \phi_j}$ is the first-order finite quotient of functional $E_1$.
	If the energy functional $E_1$ is a polynomial function of $\phi$,  the first-order finite quotient $ E_1\{\phi_i,\phi_j\}$ is also a polynomial function of $\phi_i$ and $\phi_j$.
	For the energy functional $E_1$ containing a non-polynomial function of $\phi$, such as logarithmic type functional,  the first-order finite quotient $ E_1\{\phi_i,\phi_j\}$ is no longer a polynomial function of $\phi_i$ and $\phi_j$.
	In this situation, the numerical computation  of the first-order finite quotient  $E_1\{\phi_i,\phi_j\}$ is unstable and inaccurate when $\phi_i-\phi_j$ is close to zero. 
	Unfortunately, in phase field systems, $\phi_i-\phi_j$ is often close to zero,  which indicates that the numerical calculation of $ E_1\{\phi_i,\phi_j\}$ is numerically unstable and inaccurate. 
	To overcome the difficulty,  a stable and highly accurate approximation for the first-order finite quotient $ E_1\{\phi_i,\phi_j\}$ was proposed in  \cite{huang2020parallel, huang2022parallel}.
	For ease of presentation, we focus on the double well potential, that is $E_1(\phi)= \frac 1 {4\epsilon^2}(1-\phi^2)^2$.  
	The first-order finite quotient $ E_1\{\phi_i,\phi_j\}:= \frac{E_1[\phi_i]-E_1[\phi_j]}{\phi_i - \phi_j}$ is defined as 
	\begin{equation}
	\label{nonlinear-1}
	 E_1\{\phi_i,\phi_j\}:= \frac{E_1[\phi_i]-E_1[\phi_j]}{\phi_i - \phi_j}=\frac 1 {4\epsilon^2}(\phi_i+\phi_j)(\phi^2_i+\phi^2_j-2).
	 \end{equation}

	Let us denote a vector $\mathbf {y}=(\mu[\phi_1,\phi_0],\,\mu[\phi_2,\phi_0],\,\cdots,$ $\mu[\phi_{\nu},\phi_{\nu-1}])^T\in \mathbb{R}^{\hat \nu}$ 
	and $ \overrightarrow{\delta\phi}=(\phi_1-\phi_0,\,\phi_2-\phi_0,\,\cdots,\,\phi_{\nu}-\phi_{\nu-1})^T\in \mathbb{R}^{\hat \nu}$ with $\hat \nu=\frac{\nu(\nu+1)}{2}$.	
	Following the idea of the Rung--Kutta process, we define a $\nu$ stage DVD scheme as follows:
	\begin{equation}
	\label{system-4}
	\begin{aligned}
	 \phi_i= \phi_0+ h \sum\limits_{j=1}^{\hat\nu}\tilde{ a}_{ij} {\cal G} y_j \qquad i=1,\,2,\,\cdots,\, \nu,
	\end{aligned}
	\end{equation}
	where  $\tilde a_{ij}$, $(i=1,\,2,\,\cdots,\, \nu,\, j=1,\,2,\,\cdots,\, \hat\nu)$ are numerical constants. For convenience we shall designate the process by an array as the following tableau:
	
	\begin{equation}
	\label{tableau-dvd}
	\begin{array}{cccc|c}
 	 \tilde a_{11} & \tilde  a_{12}  & \cdots &\tilde  a_{1\hat\nu}  & c_{1} \\
 	 \tilde  a_{21} &\tilde  a_{22}  & \cdots &\tilde  a_{2\hat\nu}  & c_{2} \\
 	  \vdots & \vdots  &  &\vdots & \vdots \\
 	\tilde  a_{\nu1} & \tilde  a_{\nu2}  & \cdots &\tilde  a_{\nu\hat\nu}   & c_{\nu},
	\end{array}
	\end{equation}
	where $c_i=\sum\limits_{j=1}^{\hat\nu}\tilde a_{ij} $. 
	Similar to the derivation of the Runge--Kutta method \cite{butcher2016numerical},  we can obtain the numerical constants $\tilde a_{ij}$ for given $c_i$ and $\nu$.
	It is important to point out that the numerical constants $\tilde a_{ij}$ should be chosen carefully such that the proposed scheme \eqref{system-4} is unconditionally energy stable.  
	More details on how to choose the numerical constants $\tilde a_{ij}$ are given in Appendix A.
	For ease of presentation, let us rewrite the scheme \eqref{system-4} in the matrix form as follows
	\begin{equation}
	\label{system-4-1}
	\begin{aligned}
	\overrightarrow{\delta \phi}= h {\cal G}{\mathbb A} \mathbf{y},
	\end{aligned}
	\end{equation}
	where $\mathbb{A}=(\tilde a_{ij})$ is a $\hat\nu$-order square matrix.
	The $\nu+1,\, \cdots, \hat\nu$ rows of the matrix $\mathbb{A}$ are computed  by using tableau \eqref{tableau-dvd}.
	Next, we call a vector $\mathbf{v}=(v_1,\,\cdots,\,v_{\hat\nu})\in \mathbb{R}^{\hat\nu}$ as a unit partition vector of ${\cal E}^\nu-{\cal E}^0$ if and only if 
	\begin{equation}
	{\cal E}^\nu-{\cal E}^0=v_1 ({\cal E}^1-{\cal E}^0)+ v_2 ({\cal E}^2-{\cal E}^0)+\cdots + v_{\hat \nu} ({\cal E}^{\nu}-{\cal E}^{\nu-1}).
	\end{equation}
	According to the definition of the discrete variational derivative \eqref{d-v-d}, we have
	\begin{equation}\label{eq:2-10}
	\begin{aligned}
	{\cal E}^\nu-{\cal E}^0&=\frac 1 2\left<\mathbf{y},\,\hbox{diag}(\mathbf{v})\overrightarrow{\delta \phi}\right>+\frac 12 \left<\overrightarrow{\delta \phi},\, \hbox{diag}(\mathbf{v})\mathbf{y}\right>
	\\
	&=h\left<\mathbf{y},\, {\cal G}\mathbb{B}\mathbf{y}\right>,
	\end{aligned}
	\end{equation}
	where $\mathbb{B}=\frac 1 2\left(\hbox{diag}(\mathbf{v})\mathbb{A}+\mathbb{A}^T\hbox{diag}(\mathbf{v})\right)\in\mathbb{R}^{\hat\nu\times\hat\nu}$ is a symmetric matrix.

   	The following theorem gives a sufficient condition such that the $\nu$ stage DVD scheme is unconditionally energy stable.
	\begin{theorem}\label{theorem:0}
	For any given time step $h >0$, if there exists a unit partition vector $\mathbf{v}$ of ${\cal E}^\nu-{\cal E}^0$ such that  the matrix $\mathbb{B}$ is positive semidefinite,
	 the $\nu$ stage DVD scheme is unconditionally energy stable and enjoys the following energy dissipation law
	\begin{equation}
	{\cal E}^\nu\leq{\cal E}^0.
	\end{equation}
	\end{theorem}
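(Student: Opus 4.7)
The plan is to start directly from identity \eqref{eq:2-10}, which the paper has already derived, namely
$${\cal E}^\nu-{\cal E}^0 = h\left<\mathbf{y},\,{\cal G}\mathbb{B}\mathbf{y}\right>,$$
and simply show that the right-hand side is nonpositive. Since $h>0$, it suffices to establish $\left<\mathbf{y},\,{\cal G}\mathbb{B}\mathbf{y}\right>\le 0$ whenever $\mathbb{B}$ is symmetric positive semidefinite and ${\cal G}$ is a nonpositive (self-adjoint) operator, which is exactly the standing hypothesis on the operators $-I$, $\Delta$, and $-(-\Delta)^\alpha$ discussed after \eqref{system-1}.

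First I would diagonalize $\mathbb{B}$. Since it is symmetric and positive semidefinite, there is an orthogonal matrix $\mathbb{Q}\in\mathbb{R}^{\hat\nu\times\hat\nu}$ and a diagonal matrix $\Lambda=\diag(\lambda_1,\ldots,\lambda_{\hat\nu})$ with all $\lambda_i\ge 0$ such that $\mathbb{B}=\mathbb{Q}^T\Lambda\mathbb{Q}$. The key observation is that $\mathbb{Q}$ and $\Lambda$ are matrices of scalars acting on the vector index of $\mathbf{y}$, whereas ${\cal G}$ acts spatially on each component; the two actions therefore commute. Setting $\mathbf{w}=\mathbb{Q}\mathbf{y}\in\mathbb{R}^{\hat\nu}$ (a vector of functions) and using that ${\cal G}$ is self-adjoint componentwise, I would obtain
$$\left<\mathbf{y},\,{\cal G}\mathbb{B}\mathbf{y}\right>=\left<\mathbb{Q}\mathbf{y},\,\Lambda\,{\cal G}\,\mathbb{Q}\mathbf{y}\right>=\sum_{i=1}^{\hat\nu}\lambda_i\left<w_i,\,{\cal G}w_i\right>.$$

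Each term on the right is $\le 0$ because $\lambda_i\ge 0$ and $\left<w_i,{\cal G}w_i\right>\le 0$ by the assumed nonpositivity of ${\cal G}$. Hence the sum is $\le 0$, and multiplying by $h>0$ yields ${\cal E}^\nu-{\cal E}^0\le 0$, which is the claimed energy dissipation. Unconditionality is automatic, since no restriction on $h$ was used in this chain of inequalities.

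The main obstacle, if any, is essentially bookkeeping rather than mathematical depth: one must be careful that the bilinear pairing $\left<\cdot,\cdot\right>$ on vectors of functions distributes correctly over the scalar matrix $\mathbb{Q}$ and commutes with ${\cal G}$, and that the symmetrization in the definition $\mathbb{B}=\tfrac{1}{2}(\diag(\mathbf{v})\mathbb{A}+\mathbb{A}^T\diag(\mathbf{v}))$ is indeed what produces the quadratic form in \eqref{eq:2-10}. Once these points are verified (they already are, implicitly, in the derivation preceding the theorem), the proof reduces to a one-line spectral argument as above.
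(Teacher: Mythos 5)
Your proposal is correct and follows essentially the same route as the paper: both start from the identity \eqref{eq:2-10} and conclude ${\cal E}^\nu-{\cal E}^0=h\left<\mathbf{y},\,{\cal G}\mathbb{B}\mathbf{y}\right>\leq 0$ from the positive semidefiniteness of $\mathbb{B}$ and the nonpositivity of ${\cal G}$. The only difference is that the paper asserts the nonpositivity of the quadratic form in one line, whereas you justify it explicitly via the spectral decomposition $\mathbb{B}=\mathbb{Q}^T\Lambda\mathbb{Q}$ and the fact that the scalar matrices commute with the spatially acting operator ${\cal G}$ --- a detail the paper leaves implicit but which your argument correctly supplies.
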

	\begin{proof}
	For a  positive semidefinite  matrix $\mathbb{B}$ and a negative semidefinite scale operator ${\cal G}$, it follows from \eqref{eq:2-10} that
	\begin{equation}
	{\cal E}^\nu-{\cal E}^0=h\left<\mathbf{y},\, {\cal G}\mathbb{B}\mathbf{y}\right> \leq 0
	\end{equation}
	holds for any vector $\mathbf{y}$. 
	The unconditional energy stability and  the  energy dissipation law of the $\nu$ stage DVD scheme can be  obtained from this formula. 
	\end{proof}
	The following are some $\nu$  stage schemes obtained by using the newly proposed framework,  where $p$ denotes the order of convergence rate.
		
		\begin{equation}
		\label{Sch-1}
		\renewcommand{\arraystretch}{1.5}
		\hbox{Sch-1},\qquad(\nu = 1,\,p=2), \qquad
	\begin{array}{c|c}
 	1 & 1. 
	\end{array}
	\end{equation}
	
			\begin{equation}\label{Sch-2}
		\renewcommand{\arraystretch}{1.5}
		\hbox{Sch-2},\qquad(\nu = 2,\,p=3), \qquad
	\begin{array}{ccc|c}
	  \frac 7 {18}  &-\frac 1 {6} & \frac 1 {9} & \frac 1 3 \\
 	  \frac 1 2& -\frac 1 2 &  1 & 1. 
	\end{array}
	\end{equation}
	
		\begin{equation}\label{Sch-3}
		\renewcommand{\arraystretch}{1.5}
		\hbox{Sch-3},\qquad(\nu = 2,\,p=4), \qquad
	\begin{array}{ccc|c}
 	  \frac 7 {12}  &-\frac 1 {6} & \frac 1 {12} & \frac 1 2 \\
 	  \frac 2 3& -\frac 1 3 & \frac 2 3 & 1. 
	\end{array}
	\end{equation}
	
			\begin{equation}\label{Sch-4}
		\renewcommand{\arraystretch}{1.5}
		\hbox{Sch-4},\qquad(\nu = 3,\,p=4),  \qquad
	\begin{array}{cccccc|c}
 	\frac{25}{72}& 0& - \frac 1 {24}&\frac {1} {72}  & 0 &\frac 1 {72}  & \frac 1 3 \\
 	 \frac {13}{ 36}&0  &-\frac 1{12}&\frac {13}{36}& 0 & \frac 1 {36} & \frac 2 3 \\
 	 \frac 3 8& 0 &-\frac 1 8&\frac 3 8 & 0 & \frac 3 8& 1. 
	\end{array}
	\end{equation}

	\begin{theorem}\label{theorem:1}
	For any given time step $h >0$,  the numerical schemes named as Sch-1, Sch-2, Sch-3, and Sch-4
	are unconditionally energy stable and  satisfy the following energy dissipation law
	$${\cal E}^\nu\leq {\cal E}^0.$$
	\end{theorem}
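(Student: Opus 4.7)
The plan is to verify, for each of Sch-1 through Sch-4, the hypothesis of Theorem~\ref{theorem:0}: namely, to exhibit a unit partition vector $\mathbf{v}$ of $\mathcal{E}^\nu-\mathcal{E}^0$ for which the symmetrized matrix
\[
\mathbb{B}=\tfrac12\bigl(\mathrm{diag}(\mathbf{v})\mathbb{A}+\mathbb{A}^T\mathrm{diag}(\mathbf{v})\bigr)
\]
is positive semidefinite. Since the gradient-flow operators $\mathcal{G}\in\{-I,\,\Delta,\,-(-\Delta)^\alpha\}$ are all negative semidefinite, Theorem~\ref{theorem:0} then immediately yields $\mathcal{E}^\nu\le\mathcal{E}^0$ for each scheme.

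First I would parametrize the admissible unit partition vectors. Matching the coefficients of $\mathcal{E}^0,\mathcal{E}^1,\ldots,\mathcal{E}^\nu$ on both sides of the defining identity produces $\nu+1$ linear equations in $\hat\nu$ unknowns; their sum vanishes trivially, so only $\nu$ are independent and there are $\hat\nu-\nu=\binom{\nu}{2}$ free parameters. Concretely this gives $0$ parameters for Sch-1 (forcing $\mathbf{v}=(1)$), one for Sch-2 and Sch-3 (the family satisfying $v_1=v_3$ and $v_1+v_2=1$), and three for Sch-4. These parameters are the leverage available to drive $\mathbb{B}$ toward positive semidefiniteness.

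Next, for each scheme I would complete the tableau in \eqref{Sch-1}--\eqref{Sch-4} to the full $\hat\nu\times\hat\nu$ matrix $\mathbb{A}$ as described in Section~2.2, pick a convenient $\mathbf{v}$ from the admissible family, and compute $\mathbb{B}$ explicitly. Sch-1 is immediate: $\mathbb{A}=(1)$, $\mathbf{v}=(1)$, $\mathbb{B}=(1)$. For Sch-2 and Sch-3, with $\hat\nu=3$, I would spend the one free parameter to cancel an off-diagonal entry and then check the two nontrivial leading principal minors of the resulting $3\times3$ matrix. For Sch-4, the tableau shows that columns~2 and~5 of the first three rows of $\mathbb{A}$ vanish identically; provided the construction after \eqref{tableau-dvd} preserves this in rows~4--6, the corresponding rows and columns of $\mathbb{B}$ are zero and positive semidefiniteness reduces to a $4\times4$ principal submatrix.

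The main obstacle will be Sch-4: the parameter family is largest and the remaining $4\times4$ block carries a nontrivial off-diagonal pattern. My strategy is to choose the three free parameters so that $\mathrm{diag}(\mathbf{v})\mathbb{A}$ is as close to symmetric as possible, which should make $\mathbb{B}$ factor as a Gram matrix $L^TL$ whose positive semidefiniteness is automatic; failing that, direct sign verification of the four leading principal minors will finish the check. Once all four PSD verifications are in place, Theorem~\ref{theorem:0} applies scheme by scheme and yields the claimed energy dissipation inequality $\mathcal{E}^\nu\le\mathcal{E}^0$.
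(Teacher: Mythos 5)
Your proposal follows exactly the paper's route: the paper proves Theorem~\ref{theorem:1} by exhibiting, for each of Sch-1 through Sch-4, an explicit unit partition vector $\mathbf{v}$ (namely $1$, $(\tfrac32,-\tfrac12,\tfrac32)^T$, $(\tfrac43,-\tfrac13,\tfrac43)^T$, and $\tfrac18(9,0,-1,9,0,9)^T$), displaying the resulting positive semidefinite matrix $\mathbb{B}$, and invoking Theorem~\ref{theorem:0}, and your dimension count of the admissible $\mathbf{v}$'s together with the observation that the zero columns of the Sch-4 tableau collapse $\mathbb{B}$ to a $4\times4$ block is consistent with the paper's choices. One small caution: for the singular cases (Sch-3 and Sch-4 have $\det\mathbb{B}=0$) nonnegativity of the \emph{leading} principal minors alone does not certify positive semidefiniteness, so you should either check all principal minors, use your Gram-factorization idea, or argue via the Schur complement that a positive definite leading block plus a nonnegative determinant suffices.
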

	\begin{proof} We complete the proof of this theorem by choosing a unit partition vector $\mathbf{v}$ such that the matrix $\mathbb{B}$ is positive semidefinite. 
	
	For the scheme Sch-1, we set $\mathbf{v}=1$ and the corresponding $\mathbb{B}=1$.
	
	For the scheme Sch-2, we choose $\mathbf{v}=(\frac 3 2,\, -\frac 1 2,\, \frac 3 2)^T$. The resulting positive semidefinite matrix $\mathbb{B}$  is
			\begin{equation}
		\renewcommand{\arraystretch}{1.}
		\mathbb{B} =\frac 1 {12} \left(
	\begin{array}{ccc}
	7 & -3 & 2\\
	-3& 3 & -6\\
	2 & -6 & 48
	\end{array}
	\right).
	\end{equation}
	
	For the scheme Sch-3, by set $\mathbf{v}=(\frac 4 3,\, -\frac 1 3,\, \frac 4 3)^T$, we have the positive semidefinite matrix $\mathbb{B}$ as follows
			\begin{equation}
		\renewcommand{\arraystretch}{1.}
		\mathbb{B} =\frac 1 9 \left(
	\begin{array}{ccc}
 	7 & -2 & 1 \\
 	-2 & 1 & -2\\
 	1 & -2 & 7
	\end{array}
	\right).
	\end{equation}
	
		For the scheme Sch-4, we choose $\mathbf{v}=\frac 1 8(9,\,0,\, - 1 ,\,  9 ,\,0,\,9 )^T$. The resulting positive semidefinite matrix $\mathbb{B}$ is
			\begin{equation}
		\renewcommand{\arraystretch}{1.}
		\mathbb{B} =\frac 1 {288} \left(
	\begin{array}{cccccc}
 	 {225}& 0 &  -27 & 9 & 0 & 9 \\
 	 0& 0 &  0 & 0 & 0 & 0 \\
 	 -27& 0 & 9 & -27 & 0 & -27 \\
 	 9& 0 &  -27 & 225 & 0 & 9 \\
 	 0& 0 &  0 & 0 & 0 & 0 \\
 	 9& 0 &  -27 & 9 & 0 & 225 \\
	\end{array}
	\right).
	\end{equation}
	\end{proof}
	
	The framework introduced in this section can construct arbitrary high-order  DVD schemes.
	We prove that the DVD scheme is unconditionally energy stable and satisfies  the original energy dissipation law without any assumptions and stabilization parameters, 
	which should be a big advantage of the newly proposed DVD scheme. 
        Due to the nonlinearity of the total free energy, we have that the discrete variational derivative given in \eqref{nonlinear-1} is a nonlinear function of $\phi_i$. 
        Thus, the Sch-1, Sch-2, Sch-3, and Sch-4 schemes are nonlinear and fully implicit schemes, which lead to nonlinear algebraic systems when fully discretized.
        However, it is impossible to construct a linear semi-implicit or explicit unconditionally energy stable scheme by  using the DVD method. 
        Compared with the well known linear schemes, such as the IEQ \cite{yang2016linear, zhao2017numerical} and the SAV \cite{shen2018scalar, shen2019new} approaches,  those nonlinear implicit schemes are more complex and less efficient.
        To overcome this difficulty, we proposed several relaxed DVD methods in the next section to construct linear unconditionally energy stable schemes for gradient flow problems.
	
	\section{The relaxed DVD method}
	In this section, we propose several relaxed DVD methods by combining ideas from existing linear approaches.
	The newly proposed method is called the relaxed DVD method because the formula \eqref{d-v-d} for the discrete total free energy functionals and the discrete variational derivative is relaxed to construct a linear unconditionally energy stable scheme. 
	
	The first type of  relaxed DVD method can be constructed by relaxing the formula \eqref{d-v-d} to
	\begin{equation}
	\label{d-v-d-relax}
	\begin{aligned}
	 {\cal E}^i-{\cal E}^j\leq\left<\phi_i-\phi_j,\,\mu[\phi_i,\phi_j]\right>,
	\end{aligned}
	\end{equation}
	where the discrete variational derivative $\mu[\phi_i,\phi_j]$ should be chosen as a linear function of $\phi_i$ for $i>j$.
	We then define a $\nu$ stage relaxed DVD scheme as follows:
	\begin{equation}
	\label{relaxed-dvd-scheme-1}
	\begin{aligned}
	 \phi_i= \phi_0+ h \sum\limits_{j=1}^{\hat\nu}\tilde{ a}_{ij} {\cal G} y_j \qquad (i=1,\,2,\,\cdots,\, \nu),
	\end{aligned}
	\end{equation}
	where $y_{\frac {j(2\nu-j+1)}{2}+(i-j)}=\mu[\phi_i,\phi_j]$.
	The $\nu$ stage relaxed DVD scheme \eqref{relaxed-dvd-scheme-1} is a linear scheme due to the definition of the discrete variational derivative \eqref{d-v-d-relax}.
	For a non-negative unit partition vector  $\mathbf{v}$  of ${\cal E}^\nu-{\cal E}^0$,
	we have
	\begin{equation}
	\begin{aligned}
	{\cal E}^\nu-{\cal E}^0&\leq\frac 1 2\left<\mathbf{y},\,\hbox{diag}(\mathbf{v})\overrightarrow{\delta \phi}\right>+\frac 12 \left<\overrightarrow{\delta \phi},\, \hbox{diag}(\mathbf{v})\mathbf{y}\right>
	\\
	&=h\left<\mathbf{y},\, {\cal G}\mathbb{B}\mathbf{y}\right>.
	\end{aligned}
	\end{equation}
	If the matrix $\mathbb{B}$ is positive semidefinite, it is straightforward to prove that the $\nu$ stage relaxed DVD scheme is unconditionally energy stable.
	
	Next, we introduce an approach to select $\mu[\phi_i,\phi_j]$ satisfying \eqref{d-v-d-relax} by following the idea of 
	the stabilization method \cite{shen2010numerical, zhu1999coarsening}.
	For any simple linear operator $\hat{\cal L}$ such that both $\hat{\cal L}$ and $\hat{\cal L}-\frac {\delta ^2 {\cal E}_1}{(\delta \phi)^2}[\phi]$ are positive, we take a particular convex splitting as ${\cal E}[\phi]:={\cal E}_c[\phi]-{\cal E}_e[\phi]$ with 
	$$ {\cal E}_c[\phi]=-\frac \gamma 2\left<\phi,\,\Delta\phi\right>+\frac 1 2\left<\phi,\,\hat{\cal L}\phi\right>,\quad {\cal E}_e[\phi]=\frac 1 2\left<\phi,\,\hat{\cal L}\phi\right>-{\cal E}_1[\phi],$$
	where  both ${\cal E}_c$ and ${\cal E}_e$ are convex about unknown $\phi$.
	The linear operator $\hat{\cal L}$ is commonly chosen as 
	$$
	\hat{\cal L}=\hat a_0+\hat a_1(-\Delta)+\hat a_2(-\Delta)^2+\cdots.
	$$
	For $\nu=1$, we can take the discrete variational derivative $\mu[\phi_1,\phi_0]$
	as $\frac{\delta {\cal E}_c}{\delta\phi}[\phi_1] - \frac{\delta {\cal E}_e}{\delta\phi}[\phi_0]$.
	Due to the property of the convex functional
	$$
	{\cal E}_c[\phi_1]-{\cal E}_c[\phi_0]\geq\left< \frac{\delta {\cal E}_c}{\delta\phi}[\phi_0],\phi_1-\phi_0\right>,
	$$
	we get the following inequality
	\begin{equation}
	\label{d-v-d-cvs}
	\begin{aligned}
	 {\cal E}^1-{\cal E}^0:&={\cal E}_c[\phi_1]-{\cal E}_c[\phi_0]-({\cal E}_e[\phi_1]-{\cal E}_e[\phi_0])\\
	 &\leq\left< \frac{\delta {\cal E}_c}{\delta\phi}[\phi_1],\phi_1-\phi_0\right>-\left< \frac{\delta {\cal E}_e}{\delta\phi}[\phi_0],\phi_1-\phi_0\right>\\
	 &=\left<\phi_1-\phi_0,\,\mu[\phi_1,\phi_0]\right>,
	\end{aligned}
	\end{equation}
	which leads to the well known unconditionally energy stable scheme (stabilization method):
	\begin{equation}
	\label{d-v-d-cvs-s}
	\phi_1-\phi_0= h{\cal G}\mu[\phi_1,\phi_0].
	\end{equation}
	The stabilization method is first-order and can be extended to second-order schemes, but in general, it cannot be unconditionally energy stable.
	The relaxed DVD method gives a potential way to construct high-order unconditionally energy stable scheme.  

	We then propose another type of the relaxed DVD method by following the idea of the  IEQ  approach \cite{yang2016linear, zhao2017numerical}. Let us rewrite the free energy functional as
	\begin{equation}
	{\cal E}[\phi]:=\frac {\gamma}2  \left< \phi,\, {\cal L} \phi\right>+\bar{\cal E}_1(\phi)-\frac {|\Omega|} {4\epsilon^2}(2\beta +\beta^2),
	\end{equation}
	where ${\cal L}=-\Delta + \frac\beta{\epsilon^2} $ and $\bar{\cal E}_1(\phi):=\int_\Omega\bar{ E}_1(\phi)\mathbf x=\int_\Omega \frac 1 {4\epsilon^2}(1+\beta-\phi^2)^2\hbox{d}\mathbf x$.
	Here $\beta$ is a suitable stabilization parameter.	
	Let us introduce an auxiliary variable  $Q:=Q(\phi)=\left(\bar E_1(\phi)+C_0\right)^{1/m}$, where $m$ is a positive integer number. 
	For $m=1$, $C_0$ can be  any constant. For an odd integer number $m$, $C_0$ should be a constant such that $\bar E_1(\phi)+C_0\neq 0$.
	For an even integer number $m$, $C_0$ should be a constant such that $\bar E_1(\phi)+C_0>0$, which corresponds to the assumption that the energy $\bar E_1(\phi)$ should be bounded from below.
	With the  auxiliary variable $Q$, one can obtain a modified total free energy 
	\begin{equation}
	\label{modified-free-energy}
	\hat{\cal E}[\phi,Q]=\frac {\gamma}2  \left< \phi,\, {\cal L} \phi\right>+\left<Q^m(\phi)\right>.
	\end{equation}
	For two modified discrete total free energy functionals $\hat{\cal E}^i:=\hat{\cal E}[\phi_i,Q_i]$ and  $\hat{\cal E}^j:=\hat{\cal E}[\phi_j,Q_j]$ with $Q_j$ being the approximation of $Q$ at $t=t_0+c_j h$, we assume that the discrete variational derivative $\mu[\phi_i,\phi_j]$ is a solution of
	\begin{equation}
	\label{d-v-d-ieq}	 
	\hat{\cal E}^i-\hat{\cal E}^j=\left<\phi_i-\phi_j,\,\mu[\phi_i,\phi_j]\right>.
	\end{equation}
	In the left hand side of \eqref{d-v-d-ieq}, we use modified discrete total free energy functional $\hat{\cal E}^i$ to replace the original discrete total free energy functional ${\cal E}^i$. 
	Thus, the \eqref{d-v-d-ieq}	 can be taken as a relaxation of \eqref{d-v-d}. 
	For $\nu=1$, according to  \eqref{modified-free-energy}, we have the discrete variational derivative $\mu[\phi_1,\phi_0]$ taking the following form
	\begin{equation}
	\label{d-v-d-ieq-1}	 
	 \mu[\phi_1,\phi_0]=\frac \gamma 2{\cal L} \left( \phi_1+ \phi_0\right)+\frac{Q^m_1-Q^m_0}{Q_1-Q_0}\frac{\bar E_1'(\bar\phi_{ {1}/{2}})}{m Q^{m-1}(\bar\phi_{ {1}/{2}})},
	\end{equation}
	where $\bar\phi_{ {1}/{2}}$ can be any explicit approximation of $\frac {\phi_1+\phi_0}{2}$ according to the accuracy requirement.
	By taking $Q_1-Q_0=\frac{\bar E_1'(\bar\phi_{ {1}/{2}})}{m Q^{m-1}(\bar\phi_{ {1}/{2}})}(\phi_1-\phi_0)$, one can prove that \eqref{d-v-d-ieq} holds exactly with the discrete variational derivative given by \eqref{d-v-d-ieq-1}.  
	Finally, we can define a new type of relaxed DVD scheme as follows:
	\begin{equation}
	\label{system-4-ieq}
	\left\{
	\begin{aligned}
	 \phi_1&= \phi_0+ h {\cal G} \mu[\phi_1,\phi_0] ,\\
	Q_1&=Q_0+\frac{\phi_1-\phi_0}{m Q^{m-1}(\bar\phi_{ 1/{2}})}\bar E_1'(\bar\phi_{ 1/{2}}).
	\end{aligned}
	\right.
	\end{equation}	
	According to \eqref{d-v-d-ieq}, \eqref{d-v-d-ieq-1}, and \eqref{system-4-ieq} , the unconditional (modified) energy stability of scheme \eqref{system-4-ieq} is obtained by
	\begin{equation}
	\begin{aligned}
	\hat{\cal E}^1-\hat{\cal E}^0&=\left<\phi_1-\phi_0,\,\mu[\phi_1,\phi_0]\right>
	\\
	&=h\left<{\cal G}\mu[\phi_1,\phi_0],\,\mu[\phi_1,\phi_0]\right>\leq 0.
	\end{aligned}
	\end{equation}
	For  $m=2$, the scheme \eqref{system-4-ieq} is the same as the IEQ/CN scheme. 
	
	The relaxed DVD scheme \eqref{system-4-ieq} is  linear and unconditionally stable for general gradient flows, which is constructed by following the idea of the IEQ scheme. 
	In the IEQ scheme, the energy density $\bar E_1(\phi)$ should be bounded from below, which limits the application of the IEQ scheme.
	But, the relaxed DVD scheme \eqref{system-4-ieq} with odd integer $m$, especially for $m=1$, works well for  the unbounded energy density $\bar E_1(\phi)$, which should be an advantage as compared with the IEQ scheme.
	However, the linear system usually involves field variable coefficients, which is  larger than the original linear problem.
	This is the same disadvantage as the IEQ scheme. To overcome it, we propose another relaxed DVD scheme by introducing a scale auxiliary variable similar to the SAV approach \cite{shen2018scalar, shen2019new}.

	Let us introduce a scale auxiliary variable $r:=r(\phi)=\left(\bar{\cal E}_1(\phi)+C_0\right)^{1/m}$.
	Similar to the auxiliary variable $Q$, we recommend using an odd integer number $m$ in the scale auxiliary variable $r$, such as $m=1$.
	A modified total free energy is then defined as	
	\begin{equation}
	\label{modified-free-energy-sav}
	\hat{\cal E}[\phi,r]=\frac {\gamma}2  \left< \phi,\, {\cal L} \phi\right>+r^m.
	\end{equation}
	For two modified discrete total free energy functionals $\hat{\cal E}^i:=\hat{\cal E}[\phi_i,r_i]$ and  $\hat{\cal E}^j:=\hat{\cal E}[\phi_j,r_j]$ with $r_j$ being the approximation of $r$ at $t=t_0+c_j h$, the discrete variational derivative $\mu[\phi_i,\phi_j]$ is selected such that
	\begin{equation}
	\label{d-v-d-sav}	 
	\hat{\cal E}^i-\hat{\cal E}^j=\left<\phi_i-\phi_j,\,\mu[\phi_i,\phi_j]\right>.
	\end{equation}
	The formula \eqref{d-v-d-sav} is another relaxation of  \eqref{d-v-d} since modified discrete total free energy functional $\hat{\cal E}^i$ is used to replace the original discrete total free energy functional ${\cal E}^i$.   
	For $\nu=1$, according to  \eqref{modified-free-energy-sav}, we can derive the discrete variational derivative $\mu[\phi_1,\phi_0]$ as
	\begin{equation}
	\label{d-v-d-sav-1}	 
	 \mu[\phi_1,\phi_0]=\frac \gamma 2 {\cal L}\left( \phi_1+ \phi_0\right)+\frac{r_1^m-r_0^m}{r_1-r_0}\frac{\bar E_1'(\bar\phi_{ 1/{2}})}{m r^{m-1}(\bar\phi_{ 1/{2}})},
	\end{equation}
	where $\bar\phi_{ 1/{2}}$ can be any explicit approximation of $\frac {\phi_1+\phi_0}{2}$ according to the accuracy requirement.
	By taking $r_1-r_0=\frac{\left<\bar E_1'(\bar\phi_{1/{2}})(\phi_1-\phi_0)\right>}{m r^{m-1}(\bar\phi_{ 1/{2}})}$, one can prove that \eqref{d-v-d-sav} holds exactly for the discrete variational derivative given by \eqref{d-v-d-sav-1}.
	
		A new relaxed DVD scheme is then defined as follows:
	\begin{equation}
	\label{system-4-sav}
	\left\{
	\begin{aligned}
	 \phi_1&= \phi_0+ h  {\cal G} \mu[\phi_1,\phi_0],\\
	r_1&=r_0+\frac{\left<\bar E_1'(\bar\phi_{ 1/{2}})(\phi_1-\phi_0)\right>}{m r^{m-1}(\bar\phi_{ 1/{2}})}.
	\end{aligned}
	\right.
	\end{equation}		
	According to \eqref{d-v-d-sav}, \eqref{d-v-d-sav-1} and \eqref{system-4-sav}, we have
	\begin{equation}
	\label{rdvd-sav}
	\begin{aligned}
	\hat{\cal E}^1-\hat{\cal E}^0&=\left<\phi_1-\phi_0,\,\mu[\phi_1,\phi_0]\right>
	\\
	&=h\left<{\cal G}\mu[\phi_1,\phi_0],\,\mu[\phi_1,\phi_0]\right>\leq 0,
	\end{aligned}
	\end{equation}
	which corresponds to the unconditional (modified) energy stability of scheme \eqref{system-4-sav}.	
	For   $m=2$, the scheme \eqref{system-4-sav} is the same as the SAV/CN scheme. 
	
	In summary, we have the following energy stable theorem for all relaxed DVD schemes.
	\begin{theorem}\label{theorem:1-1}
	For any given time step $h >0$,  the relaxed DVD scheme \eqref{d-v-d-cvs-s}, \eqref{system-4-ieq} and \eqref{system-4-sav}
	are unconditionally energy stable and  satisfy the following energy dissipation law
	$$\hat {\cal E}^1\leq \hat{\cal E}^0.$$
	\end{theorem}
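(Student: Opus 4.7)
The plan is to treat the three schemes one by one, because the theorem bundles together three fundamentally different constructions (stabilization, IEQ-type, SAV-type) that share only the common template ``energy increment equals inner product with the discrete variational derivative, then apply the nonpositivity of $\cal G$.'' In each case the argument has two stages: (i) verify that the chosen $\mu[\phi_1,\phi_0]$ satisfies either the identity \eqref{d-v-d-ieq}/\eqref{d-v-d-sav} or the relaxed inequality \eqref{d-v-d-cvs}, and (ii) substitute the update $\phi_1-\phi_0=h{\cal G}\mu[\phi_1,\phi_0]$ into that bound.

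For the stabilization scheme \eqref{d-v-d-cvs-s}, I would essentially recycle the chain already displayed in \eqref{d-v-d-cvs}: the convex-concave splitting ${\cal E}={\cal E}_c-{\cal E}_e$ implies the two standard gradient inequalities ${\cal E}_c[\phi_1]-{\cal E}_c[\phi_0]\ge \langle \delta_\phi{\cal E}_c[\phi_1],\phi_1-\phi_0\rangle$ (from convexity of $-{\cal E}_c$ about $\phi_1$) and $-{\cal E}_e[\phi_1]+{\cal E}_e[\phi_0]\ge -\langle \delta_\phi{\cal E}_e[\phi_0],\phi_1-\phi_0\rangle$, combining to give ${\cal E}^1-{\cal E}^0\le \langle \phi_1-\phi_0,\mu[\phi_1,\phi_0]\rangle$. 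Substituting the update then yields ${\cal E}^1-{\cal E}^0\le h\langle \mu,{\cal G}\mu\rangle\le 0$ by nonpositivity of ${\cal G}$. Here I read $\hat{\cal E}$ as the original ${\cal E}$, since no auxiliary variable is introduced.

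For the IEQ-style scheme \eqref{system-4-ieq}, I would first expand
\begin{equation*}
\hat{\cal E}^1-\hat{\cal E}^0=\tfrac{\gamma}{2}\bigl(\langle\phi_1,{\cal L}\phi_1\rangle-\langle\phi_0,{\cal L}\phi_0\rangle\bigr)+\langle Q_1^m-Q_0^m\rangle,
\end{equation*}
then use self-adjointness of ${\cal L}$ to rewrite the quadratic part as $\tfrac{\gamma}{2}\langle\phi_1-\phi_0,{\cal L}(\phi_1+\phi_0)\rangle$, and factor $Q_1^m-Q_0^m=\frac{Q_1^m-Q_0^m}{Q_1-Q_0}(Q_1-Q_0)$. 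The second line of \eqref{system-4-ieq} lets me replace $Q_1-Q_0$ pointwise by $\tfrac{\bar E_1'(\bar\phi_{1/2})}{m\,Q^{m-1}(\bar\phi_{1/2})}(\phi_1-\phi_0)$, after which the two contributions collapse exactly into $\langle\phi_1-\phi_0,\mu[\phi_1,\phi_0]\rangle$ with $\mu$ as in \eqref{d-v-d-ieq-1}. Plugging in the first equation of the scheme then gives $\hat{\cal E}^1-\hat{\cal E}^0=h\langle\mu,{\cal G}\mu\rangle\le 0$. The SAV-style scheme \eqref{system-4-sav} is entirely parallel: the only difference is that $r$ is a scalar rather than a field, so the factorization $r_1^m-r_0^m=\frac{r_1^m-r_0^m}{r_1-r_0}(r_1-r_0)$ and the auxiliary update produce an integrated pairing $\left\langle \bar E_1'(\bar\phi_{1/2})(\phi_1-\phi_0)\right\rangle$ in place of a pointwise product, matching \eqref{d-v-d-sav-1}; then the same final substitution delivers \eqref{rdvd-sav}.

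The main obstacle, and the only step that is not essentially bookkeeping, is the algebraic verification in cases (ii) and (iii) that the chosen discrete-time update of the auxiliary variable ($Q_1$ or $r_1$) really does make the modified-energy increment equal $\langle\phi_1-\phi_0,\mu[\phi_1,\phi_0]\rangle$ on the nose; care is needed with the divided-difference quotients $(Q_1^m-Q_0^m)/(Q_1-Q_0)$ and $(r_1^m-r_0^m)/(r_1-r_0)$, which must be interpreted by continuity when the denominator vanishes (replaced by $mQ_0^{m-1}$ or $m r_0^{m-1}$), but in either interpretation the identity is preserved because the update equation forces the multiplicative factor to cancel. Once that identity is in hand, nonpositivity of ${\cal G}$ closes all three sub-proofs uniformly.
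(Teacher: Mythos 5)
Your proposal follows essentially the same route as the paper: the paper gives no separate proof of this theorem but establishes each case in the preceding text, via the convexity chain \eqref{d-v-d-cvs} for the stabilization scheme and the identities culminating in the display after \eqref{system-4-ieq} and in \eqref{rdvd-sav} for the IEQ- and SAV-type schemes, exactly as you outline (including reading $\hat{\cal E}={\cal E}$ for \eqref{d-v-d-cvs-s}). One slip to fix in the stabilization case: for convex ${\cal E}_c$ and ${\cal E}_e$ the two gradient inequalities you need are ${\cal E}_c[\phi_1]-{\cal E}_c[\phi_0]\leq\left<\frac{\delta{\cal E}_c}{\delta\phi}[\phi_1],\,\phi_1-\phi_0\right>$ and ${\cal E}_e[\phi_1]-{\cal E}_e[\phi_0]\geq\left<\frac{\delta{\cal E}_e}{\delta\phi}[\phi_0],\,\phi_1-\phi_0\right>$; as written your two inequalities both point the wrong way (and the first is attributed to convexity of $-{\cal E}_c$ rather than of ${\cal E}_c$), so they would combine to a lower bound rather than the needed upper bound ${\cal E}^1-{\cal E}^0\leq\left<\phi_1-\phi_0,\,\mu[\phi_1,\phi_0]\right>$. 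With the directions corrected, the argument coincides with the paper's.
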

	
	The IEQ approach and SAV approach are remarkable as they allow us to construct linear and unconditionally stable schemes for a large class of gradient flows.
	However, those approaches still suffer from the following drawbacks:
	\begin{enumerate}
\item[1.] At each time step, the IEQ approach needs to solve a linear system involving auxiliary variables and the SAV approach needs to solve two linear equations.
\item[2.] For the two approaches, one needs to assume that $\bar{\cal E}_1$ or $\bar E_1$ should be bounded from below. 
\item[3.] For gradient flows with multiple components, the IEQ approach will lead to coupled systems.
\end{enumerate}
	The relaxed DVD method inherits all the advantages of the IEQ and SAV approach but also overcomes most of its shortcomings. 
	If $m$ is an odd integer number, we don't need to assume that $\bar{\cal E}_1$ or $\bar E_1$ should be bounded from below.
	More importantly, for $m=1$, the relaxed DVD schemes \eqref{system-4-ieq} and \eqref{system-4-sav} are fully decoupled schemes.
	At each time step, we only need to solve the first equation of \eqref{system-4-ieq} or \eqref{system-4-sav}, which corresponds to a linear system without auxiliary variables. 
	If necessary, we can explicitly update the auxiliary variables by using the second equation of \eqref{system-4-ieq} or \eqref{system-4-sav}.
	The computational complexity is half  that of the SAV approach and less than half that of the IEQ approach, respectively.

	The relaxed DVD schemes \eqref{system-4-ieq} and \eqref{system-4-sav} both are second-order. 
	However, it is nontrivial to construct a high-order energy stable scheme by using the relaxed DVD method. 
	For $\nu>1$, it is difficulty to find discrete variational derivatives $\mu[\phi_i,\phi_j]$ such that the equation \eqref{d-v-d-ieq} and \eqref{d-v-d-sav} are exactly hold for $1\leq j<i\leq \nu$. 
	We list it as future work. 
	In addition, one can use a class of extrapolated and linearized Runge--Kutta method to build arbitrary high-order energy stable schemes, see \cite{akrivis2019energy} for more details.

	\section{Fully discrete scheme and linear/nonlinear solver}
	\subsection{Fully discrete scheme}
	A fully discrete scheme is then constructed by discretizing the $\nu$ stage DVD scheme \eqref{system-4} or the relaxed DVD schemes  \eqref{d-v-d-cvs-s}, \eqref{system-4-ieq}, and \eqref{system-4-sav} with a spatial discrete method, such as the finite difference method,
	the finite element method, or the spectral method.
	If the relationship \eqref{d-v-d}, \eqref{d-v-d-relax}, \eqref{d-v-d-ieq}, or \eqref{d-v-d-sav} is still exactly held in the fully discrete level and the discretization operator of  ${\cal G}$ is still negative semidefinite, the fully discrete scheme is unconditionally stable and satisfies the energy dissipation law.
	In this paper, we apply the finite difference method to  discrete the $\nu$ stage DVD scheme \eqref{system-4}.	
	The fully discrete scheme for the relaxed DVD schemes  \eqref{d-v-d-cvs-s}, \eqref{system-4-ieq}, and \eqref{system-4-sav} can be constructed similarly.
	
	Without loss of generality, we take the $H^{-1}$ gradient flow as an example to introduce the fully discrete scheme. 
	The operator ${\cal G}=\Delta $ in the $H^{-1}$ gradient flow.
	A one dimensional domain $\Omega=[0,L_x]$  is covered by a uniform mesh with the mesh size $\Delta x = L_x/N_x$. 
	Let us denote  $\tilde{\phi}_{i}^j\approx \phi_i(x_j)$  as the approximate solution $\phi_i$ at $x=x_j$, where
	$x_j= (j-\frac{1}{2})\Delta x$  with $1\leq j \leq N_x$. 
	Throughout the paper, notations with a tilde are the corresponding approximate solutions or functions at the discrete level.
 	Let us introduce some useful discrete operators as follows.
$$[D_x^+\tilde\phi_i]^{j}=\frac{\tilde\phi_i^{j+1}-\tilde\phi_i^{j}}{\Delta x},\quad 
\ [D_x^-\tilde\phi_i]^j=\frac{\tilde\phi^j_{i}-\tilde\phi^{j-1}_{i}}{\Delta x},\quad \
$$
 $$
[D_x \tilde\phi_i]^j=\frac{\tilde\phi_i^{j+\frac{1}{2}}-\tilde\phi_i^{j-\frac{1}{2}}}{\Delta x}, 
\quad \ 
 \left[(D_x^\pm\tilde\phi_i)^2\right]^j=\frac{\left([D_x^+\tilde\phi_i]^j\right)^2+\left([D_x^-\tilde\phi_i]^j\right)^2}{2}.$$
	The operator $\cal G$ is discretized by
$[\Delta\tilde\phi_i]^j =  [D_x^2 \tilde\phi_i]^j$.
With the periodic boundary conditions or the homogeneous Neumann boundary conditions, 
we present  vital formula
\begin{equation}
\label{summation}
\begin{aligned}
\sum_{j=1}^{N_x}\tilde \varphi^j\left[D_x^2\tilde \phi_i\right]^{j}\color{black}
=-\frac 1 2
\sum_{j=1}^{N_x}
\left([D_x^+ \tilde \varphi]^{j} [D_x^+\tilde \phi_i]^{j}\right),
   \end{aligned}
\end{equation}
\color{black}
which implies that the discretization operator of $\cal G$ is negative semidefinite.

	We then define the  discrete free energy functional as 
	\begin{equation}
	\label{free-energy-dis}
	\tilde{\cal E}^i=\sum\limits_{j=1}^{N_x}\Delta x\left\{\frac{\gamma}{2}\left[(D_x^\pm\tilde{\phi}_i)^2\right]^j+E_1[\tilde\phi^j_i]\right\}.
	\end{equation}
	Correspondingly, we can derive the discrete variational derivative $\tilde\mu[\tilde \phi_i^j,\tilde\phi_s^j]$ as
	\begin{equation}
	 \tilde y^j_{\frac {s(2\nu-s+1)}{2}+(i-s)}:=\tilde\mu[\tilde \phi_i^j,\tilde\phi_s^j]=-\frac \gamma 2 \left([D_x^2 \tilde\phi_i]^j+[D_x^2 \tilde\phi_s]^j\right) +E_1\{\tilde\phi^j_i,\tilde\phi_s^j\},
	\end{equation}
	where $i>s$.
	With the definition, we have
	\begin{equation}
	\label{d-v-d-f}
	 \tilde{\cal E}^i-\tilde{\cal E}^s=\sum\limits_{j=1}^{N_x}\Delta x(\tilde\phi^j_i-\tilde\phi^j_s)\tilde\mu[\tilde \phi_i^j,\tilde\phi_s^j],
	\end{equation}
	which implies that the formula \eqref{d-v-d} holds in the discrete level. 
	Finally, the fully discrete scheme is given by 
	\begin{equation}
	\label{fully-system}
	\begin{aligned}
	 \tilde\phi_i^j= \tilde\phi^j_0+ h \sum\limits_{s=1}^{\hat\nu}\tilde{ a}_{is} [D_x^2 \tilde y_s]^j \qquad (i=1,\,2,\,\cdots,\, \nu).
	\end{aligned}
	\end{equation}
	\begin{theorem}\label{theorem:2}
	For any given time step $h >0$,  the fully discrete schemes \eqref{fully-system} with $\tilde{ a}_{is} $ given by \eqref{Sch-1}-\eqref {Sch-4}
	are unconditionally energy stable and  satisfy the following energy dissipation law
	$$\tilde{\cal E}^\nu\leq \tilde{\cal E}^0.$$
	\end{theorem}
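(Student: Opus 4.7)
The plan is to mirror the proof of Theorem 2.1 at the fully discrete level, exploiting the fact that the three structural ingredients needed there survive the finite-difference discretization. First I would collect these ingredients: (i) the discrete DVD identity \eqref{d-v-d-f} holds \emph{exactly}, not just up to truncation, so the telescoping argument leading to \eqref{eq:2-10} goes through verbatim; (ii) the summation-by-parts formula \eqref{summation} shows that the discrete operator $\tilde{\cal G}^j=[D_x^2\,\cdot\,]^j$ is negative semidefinite with respect to the weighted inner product $\langle\tilde\varphi,\tilde\psi\rangle_{\Delta x}:=\sum_{j=1}^{N_x}\Delta x\,\tilde\varphi^j\tilde\psi^j$; and (iii) the purely algebraic matrix $\mathbb{B}=\tfrac12(\mathrm{diag}(\mathbf v)\mathbb{A}+\mathbb{A}^T\mathrm{diag}(\mathbf v))$ depends only on the tableau and on the unit partition vector $\mathbf v$, so the positive semidefiniteness established in the proof of Theorem~\ref{theorem:1} for each of Sch-1 through Sch-4 is inherited unchanged.

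Next I would set up discrete analogues of the vectors in Section~2. Write $\overrightarrow{\delta\tilde\phi}=(\tilde\phi_1-\tilde\phi_0,\tilde\phi_2-\tilde\phi_0,\ldots,\tilde\phi_\nu-\tilde\phi_{\nu-1})^T$ and $\tilde{\mathbf y}=(\tilde\mu[\tilde\phi_1,\tilde\phi_0],\ldots,\tilde\mu[\tilde\phi_\nu,\tilde\phi_{\nu-1}])^T$, both viewed pointwise in $j$. By the same extension of the tableau explained in Section~2, the fully discrete scheme \eqref{fully-system} can be recast as $\overrightarrow{\delta\tilde\phi}=h\,\tilde{\cal G}\mathbb{A}\tilde{\mathbf y}$, where $\tilde{\cal G}$ is applied componentwise in the stage index. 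Choosing the same unit partition vector $\mathbf v$ as in Theorem~\ref{theorem:1} and summing the discrete DVD identities \eqref{d-v-d-f} with weights $v_1,\ldots,v_{\hat\nu}$ yields
\begin{equation}
\tilde{\cal E}^\nu-\tilde{\cal E}^0
=\tfrac12\langle\tilde{\mathbf y},\mathrm{diag}(\mathbf v)\overrightarrow{\delta\tilde\phi}\rangle_{\Delta x}
+\tfrac12\langle\overrightarrow{\delta\tilde\phi},\mathrm{diag}(\mathbf v)\tilde{\mathbf y}\rangle_{\Delta x}
=h\,\langle\tilde{\mathbf y},\tilde{\cal G}\mathbb{B}\tilde{\mathbf y}\rangle_{\Delta x},
\end{equation}
which is the discrete counterpart of \eqref{eq:2-10}.

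To conclude, I would observe that $\mathbb{B}$ acts only on the stage index and $\tilde{\cal G}$ only on the spatial index, so the two operators commute and share simultaneous diagonalization in a tensor sense; equivalently, writing $\tilde{\mathbf y}=\sum_k \mathbf w_k\otimes \tilde\psi_k$ with $\mathbf w_k$ eigenvectors of $\mathbb{B}$ (with nonnegative eigenvalues $\lambda_k\geq 0$), we get $\langle\tilde{\mathbf y},\tilde{\cal G}\mathbb{B}\tilde{\mathbf y}\rangle_{\Delta x}=\sum_k\lambda_k\langle\tilde\psi_k,\tilde{\cal G}\tilde\psi_k\rangle_{\Delta x}\le 0$ by \eqref{summation}. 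Hence $\tilde{\cal E}^\nu-\tilde{\cal E}^0\le 0$ for every $h>0$, as claimed.

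I expect the main obstacle to be the bookkeeping in the last step: making sure that the stagewise action of $\mathbb{B}$ and the pointwise-in-$j$ action of $\tilde{\cal G}$ really do separate, so that positive semidefiniteness of $\mathbb{B}$ and negative semidefiniteness of $\tilde{\cal G}$ combine with the correct sign. Once this is verified (either by the eigen-expansion above or, more concretely, by rewriting $\langle\tilde{\mathbf y},\tilde{\cal G}\mathbb{B}\tilde{\mathbf y}\rangle_{\Delta x}=-\tfrac12\sum_j[D_x^+\tilde{\mathbf y}]^j\cdot\mathbb{B}[D_x^+\tilde{\mathbf y}]^j\Delta x$ via \eqref{summation} applied stagewise), the inequality is immediate and the theorem follows for each of Sch-1 through Sch-4 without any CFL-type restriction on $h$ or $\Delta x$.
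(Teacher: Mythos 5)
Your proposal is correct and follows exactly the route the paper intends: it transplants the proof of Theorem~\ref{theorem:1} to the discrete level using the exact discrete DVD identity \eqref{d-v-d-f}, the summation-by-parts formula \eqref{summation} for negative semidefiniteness of $[D_x^2\,\cdot\,]$, and the same unit partition vectors $\mathbf v$ and positive semidefinite matrices $\mathbb{B}$, which is precisely what the paper means by ``similar to the proof for Theorem~\ref{theorem:1}.'' Your explicit verification that the stagewise action of $\mathbb{B}$ and the spatial action of $\tilde{\cal G}$ separate (via the eigen-expansion or the rewriting $-\tfrac12\sum_j[D_x^+\tilde{\mathbf y}]^j\cdot\mathbb{B}[D_x^+\tilde{\mathbf y}]^j\Delta x\le 0$) is a detail the paper glosses over even in the continuous case, and it is a welcome addition.
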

	The proof for \cref{theorem:2} is similar to the proof for \cref{theorem:1} and will not be given in this paper.
	
	\subsection{Linear/nonlinear solver}
	By discretizing the gradient flow \eqref{system-1} with the proposed energy stable scheme \eqref{fully-system}, 
	a discrete nonlinear equation system is  
	constructed and required to be solved at each time step. 
	This should be a   disadvantage compared to linear schemes such as the IEQ and SAV methods since nonlinear algebraic systems are more expensive to solve than linear algebraic systems. 
	In this paper, we solve the nonlinear system by using the inexact Netwon type algorithm \cite{NKS}.
	At each time step, the nonlinear algebraic system is denoted as ${\cal F}(\mathbf{X})=0$, and the unknown $\mathbf{X}$ is organized by points, i.e.  
$\mathbf{X}=(\tilde \phi^1_{1},\, \tilde \phi^1_{2}$, $\cdots,\, \tilde{\phi}^1_{\nu},\,\tilde \phi^2_{1},\, \tilde \phi^2_{2} ,\,\cdots)^{T}$.
	Assume $\mathbf{X}_{m}$ is the solution at the $m$-th Newton iteration, then a new solution $\mathbf{X}_{m+1}$ can be obtained by the following steps.
 \begin{itemize}
 \item Compute {an} inexact Newton direction $\mathbf{S}_m$ by solving the following Jacobian system with an iterative method:
 \begin{equation}\label{eq:line-system}
\begin{array}{ll}
J_m\mathbf{S}_m=-{\cal F}(\mathbf{X}_m),
\end{array}
\end{equation}
where  $J_m:=\nabla {\cal F}(\mathbf{X}_m)$ is the Jacobian matrix at the solution $\mathbf{X}_m$. 

 \item Calculate a new solution through $\mathbf{X}_{m+1}=\mathbf{X}_{m} +\lambda_m\mathbf{S}_m$, where the step length $\lambda_{m}\in (0,1]$ is determined  by a line search procedure \cite{NMFU}.
\end{itemize}
The   stopping condition for the nonlinear iteration is set as
\begin{equation}
\|{\cal F}(\mathbf{X}_{m+1})\| \leq \max\{\varepsilon_r
\|{\cal F}(\mathbf{X}_0)\|, \varepsilon_a\},
\end{equation}
where $\varepsilon_r=10^{-12}, \varepsilon_a =10^{-12}$ are the relative and absolute tolerances for the 
nonlinear iteration, respectively.

In each time step of the relaxed DVD schemes or each iterative step of the Newton solver,
we need to solve a large sparse linear system.
Let us denote the linear system as ${\cal A}{\cal X}=\mathbf{b}$.
In this paper, we solve the linear system by using a restarted Generalized Minimal Residual (GMRES) method \cite{gmres} with right-preconditioner until the linear residual $\mathbf{R} = {\cal A}{\cal X}-\mathbf{b}$ satisfies 
the stopping condition as follows
$$\|\mathbf{R}\| \leq  \textnormal{max} \{\xi_r\|\mathbf b\|, \xi_a\}.$$ 
Here $\xi_r, \xi_a $ are the relative and absolute tolerances for the 
linear iteration, respectively. 
For the relaxed DVD schemes, we take the tolerances as $\xi_r=10^{-12}, \xi_a=10^{-12} $.
In each iterative step of the Newton solver, the tolerances are set as $\xi_r=10^{-14}, \xi_a=10^{-3} $.
The preconditioned linear system is written as
\begin{equation}
\label{hjacobi}
{\cal A}H^{-1}(H{\cal X}) =\mathbf b,
\end{equation}
where $H^{-1}$ is the additive Schwarz type preconditioner. 
In this paper, we use the standard additive Schwarz preconditioner \cite{DDA}.
More comparison of the standard additive Schwarz preconditioner, 
the left restricted additive Schwarz \cite{cai99sisc} preconditioner, 
and the right restricted  additive Schwarz \cite{cai03sinum_ash} preconditioner can be found in \cite{huang2020parallel, huang2022parallel, wei2019parallel}.

\section{Numerical simulations}
In this section, we apply the DVD scheme \eqref{system-4}, the relaxed DVD schemes \eqref{system-4-ieq} and \eqref{system-4-sav} to solve the Allen--Cahn (AC) equation and Cahn--Hilliard (CH) equation. 
We compare four DVD schemes and two relaxed DVD schemes, whose abbreviations are given in \cref{abbreviation}.
We mainly focus on the efficiency, stability, and accuracy of the newly proposed methods.
The algorithms for the phase field system are implemented on top of the Portable, Extensible Toolkits for Scientific computations (PETSc) library \cite{balay2019petsc}.
The numerical experiments are performed on the LSSC-IV supercomputer. 
\begin{table}[!htb]
\caption{
The abbreviations and comparison for six DVD schemes.
}
\label{abbreviation} 
\centering
\begin{tabular}{|c|cccc||cc|}
\hline
&\multicolumn{4}{|c||}{The DVD scheme \eqref{system-4}}   &\multicolumn{2}{|c|}{The relaxed DVD scheme \eqref{system-4-sav}} \\
\hline
&\multicolumn{4}{|c||}{Implicit}   &\multicolumn{2}{|c|}{Semi-implicit} \\
\hline
Abbr & Sch-1&Sch-2 & Sch-3& Sch-4& R-DVD-1& SAV/CN\\
\hline
$\mathbb{A}$ & \eqref{Sch-1}& \eqref{Sch-2} &  \eqref{Sch-3}&  \eqref{Sch-4}&$-$& $-$\\
\hline
$m$ &$-$& $-$ &  $-$&  $-$&1& 2\\
\hline
order &2& 3 &  4&  4&2& 2\\
\hline
\end{tabular}
\end{table}

\subsection{Cahn--Hilliard equation}
{\it Example 1.} (Convergence rate of the newly proposed schemes for the standard CH equation.)
The computational domain is set as $(0,\,2\pi)^2$, which is covered by a $500\times500$ uniform mesh. 
The parameters $\epsilon = 0.1$ and $\gamma=1$. 
The initial data is chosen as: $\phi(\mathbf{x},0)=0.05 \sin x_1 \sin x_2$. 
The stabilization parameter $\beta$ is chosen as 1.
In the relaxed DVD scheme \eqref{system-4-sav}, $\bar\phi_{n+1/2}=\frac 3 2 \phi_n-\frac 1 2\phi_{n-1}$.
The numerical solutions at $t=0.01$ obtained by using $\Delta t$ sufficiently small are taken as the reference solutions.
The numerical errors at $t=0.01$ for six schemes are shown in \cref{convergencerate}, where we can observe the idea convergence rate for all schemes. 
While the Sch-1, R-DVD-1, and SAV/CN schemes are second-order accurate, the numerical error of the implicit scheme ( Sch-1) is an order of magnitude smaller than that of the other two semi-implicit schemes (R-DVD-1, and SAV/CN). 
We further compare the efficiency of the three second-order schemes. 
The time step size is set as 1e-4 for the implicit scheme (Sch-1) and 2.5e-5 for the other two second-order semi-implicit schemes (R-DVD-1, and SAV/CN),
such that the numerical errors coming from the three second-order schemes are almost the same. 
The total computing times and the linear/nonlinear iteration numbers are shown in \cref{efficiency}. 
It can be observed that the computation time per time step of the R-DVD-1 scheme is almost half that of the other two schemes due to the lowest computational complexity per time step.
However, due to the accuracy advantage, the implicit scheme (Sch-1) outperforms other second-order schemes in the total computing time.

\begin{table}[h]
\caption{
Errors and convergence rates of  the DVD schemes and the relaxed DVD schemes  for
the CH equation.
}
\label{convergencerate} 
\centering
\begin{tabular}{|c|c|cccc|}
\hline
{Scheme} &$h$&1e-4 &5e-5 &2.5e-5 &1.25e-5 \\
\hline
  \multirow{2}{*}{Sch-1}     		&Error                                &1.95e-4                  &4.79e-5                     &1.08e-5 &1.78e-6 \\
                                    			&     Rate          &-                          &2.02              &2.15 &2.59 \\
\hline
  \multirow{2}{*}{R-DVD-1}     		&Error                                &3.42e-3                  &8.71e-4                     &2.21e-4 &5.34e-5 \\
                                    			&     Rate          &-                          &1.97              &1.98 &2.05 \\
\hline
  \multirow{2}{*}{SAV/CN}     		&Error                                &3.43e-3                  &8.75e-4                     &2.22e-4 &5.36e-5 \\
                                    			&     Rate          &-                          &1.97              &1.98 &2.05 \\
                                    \hline
                                    \hline
                               Scheme     &$h$&5e-4 &2.5e-4 &1.25e-4 &6.25e-5 \\
                                    \hline
  \multirow{2}{*}{Sch-2}     		&Error             &2.09e-4                   &1.48e-5                  &1.75e-6                 &2.18e-7 \\
                                    		&     Rate          &-                                &3.82             &3.09&3.00 \\
                                    \hline
  \multirow{2}{*}{Sch-3}     		&Error             &2.10e-4                   &1.24e-5                  &7.72e-7                 &4.88e-8 \\
                                    		&     Rate          &-                                &4.08             &4.01&3.98 \\
                                    \hline
  \multirow{2}{*}{Sch-4}     		&Error             &9.05e-5                   &5.43e-6                 &3.34e-7                 &1.98e-8 \\
                                    		&     Rate          &-                                &4.06             &4.01&4.08 \\
\hline
\end{tabular}
\end{table}

\begin{table}[H]
\caption{
The total linear/nonlinear iterations and total computing time of  the Sch-1, R-DVD-1, and SAV/CN  for
the CH equation. All simulations are run by using 144 processors. 
}
\label{efficiency} 
\centering
\begin{tabular}{|c|c|cccc|}
\hline
{Scheme} &$h$&Time steps  & Newton &GMRES &Computing time (s)\\
\hline
{Sch-1}     		&1e-4                              &100              &341                    &5,697 & 18.07 \\
\hline
\hline
{R-DVD-1}       		&2.5e-5                              &400              &$-$                    &12,374&33.87 \\
\hline
\hline
SAV/CN  		&2.5e-5                              &400              &$-$                    & 25,780&72.38 \\
\hline
\end{tabular}
\end{table}

\begin{figure}[H]
\hspace{0.0\textwidth} \subfloat[]{
\includegraphics[width=0.48\textwidth]{./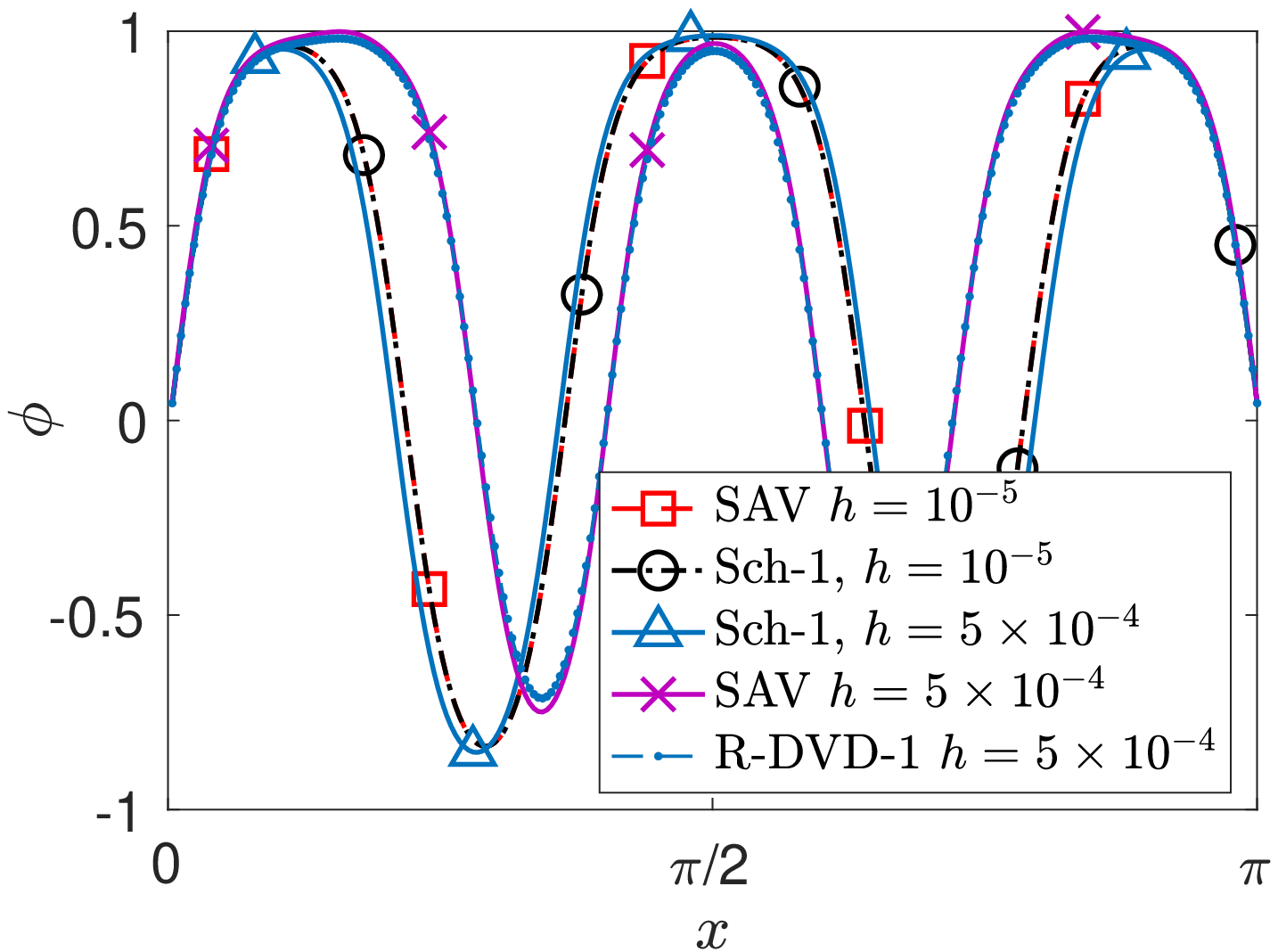}}
\hspace{0.0\textwidth} \subfloat[]{
\includegraphics[width=0.48\textwidth]{./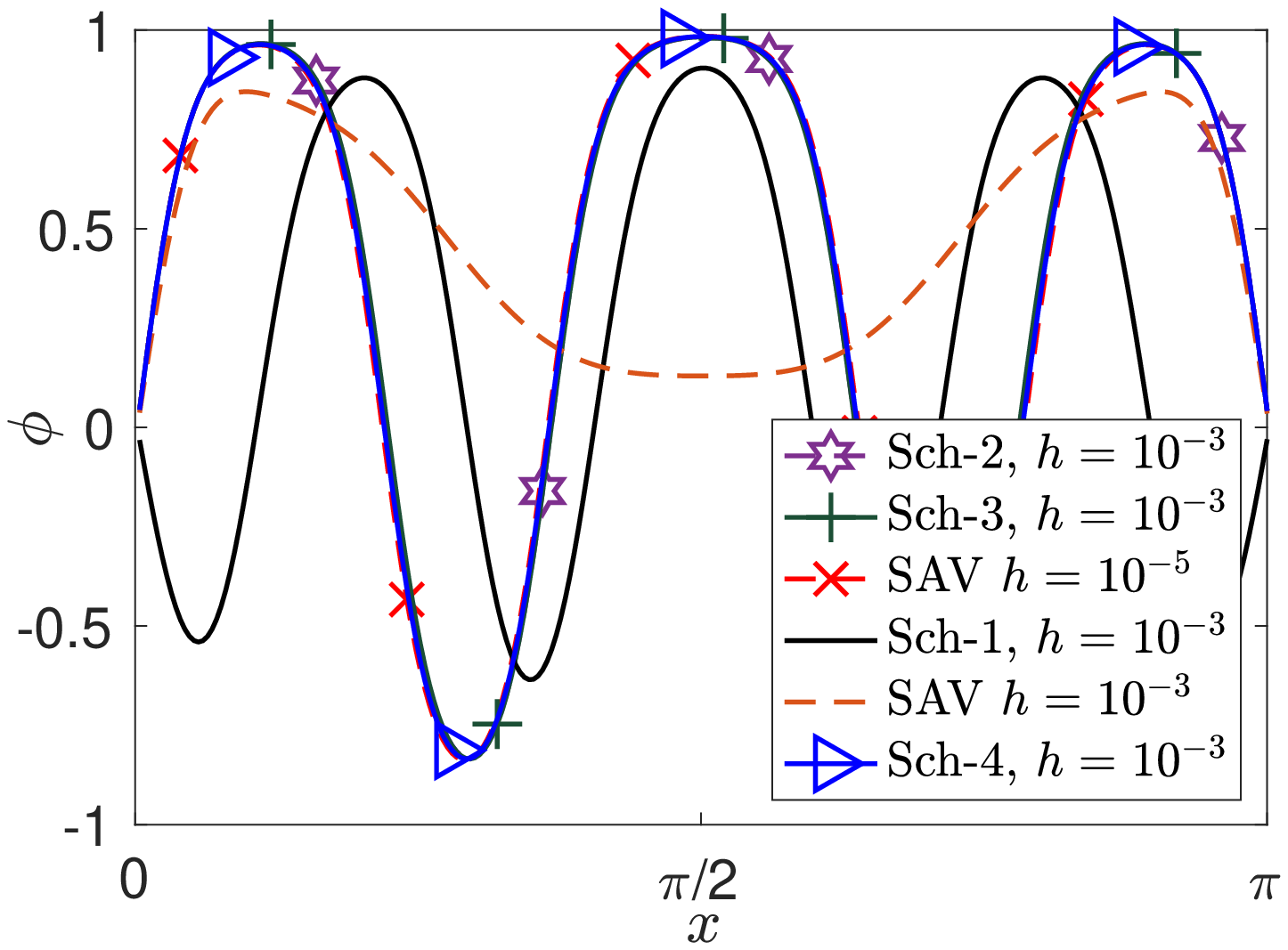}}
\hspace{0.0\textwidth} \subfloat[]{
\includegraphics[width=0.48\textwidth]{./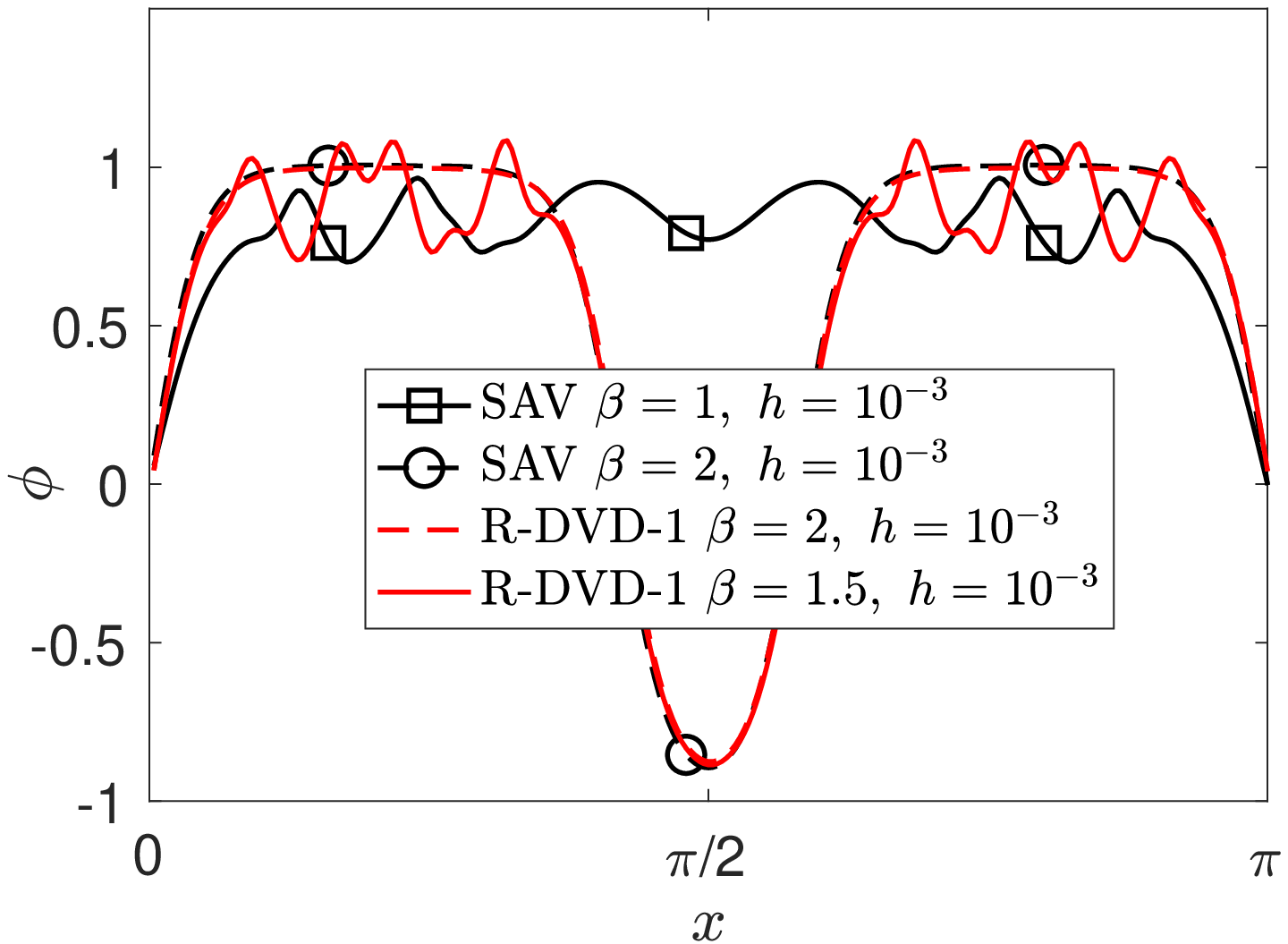}}
\hspace{0.0\textwidth} \subfloat[]{
\includegraphics[width=0.48\textwidth]{./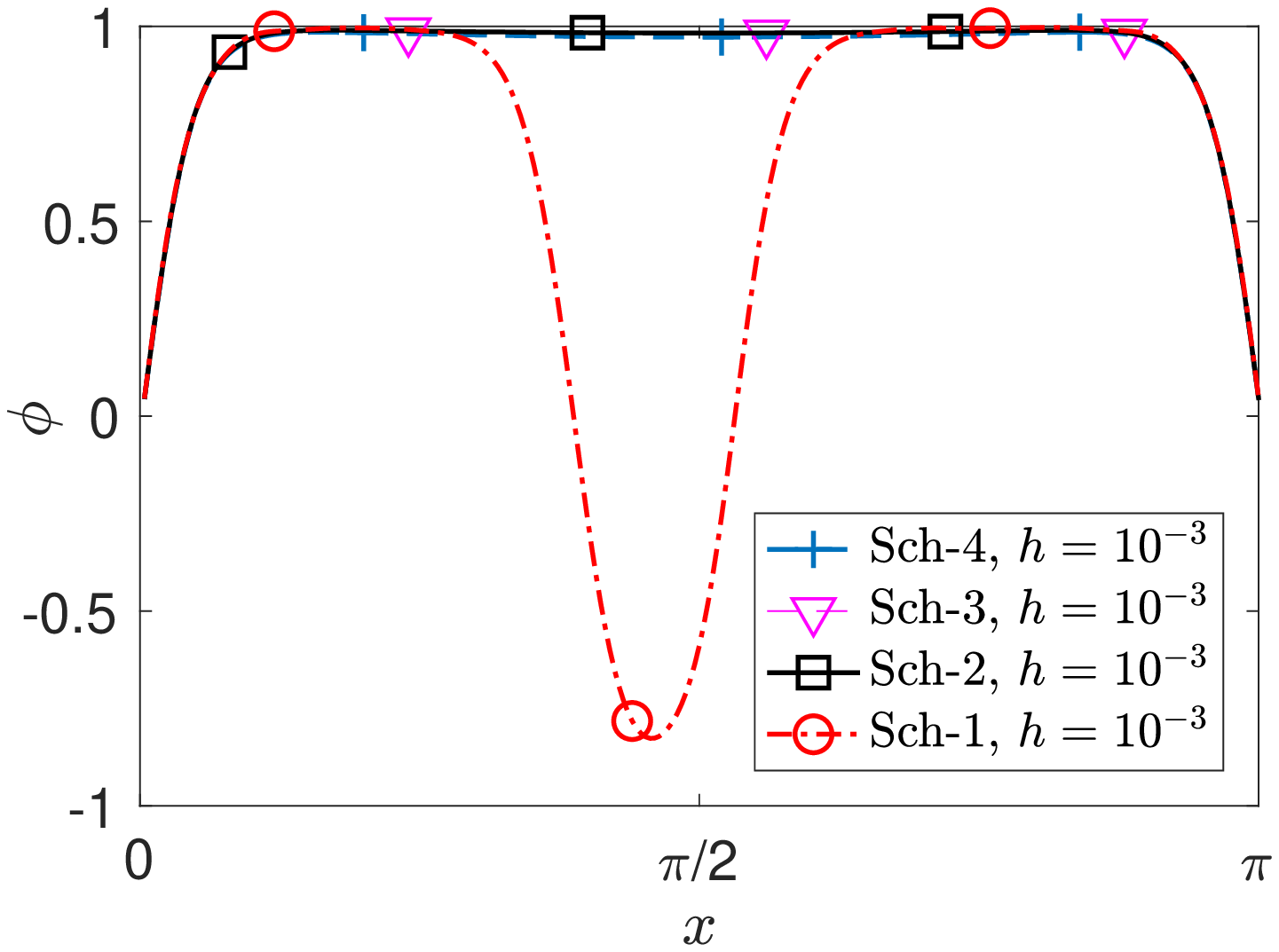}}
\caption{ Numerical solutions of $\phi$ at different time. (a)  $T = 0.01$.  $h = 5\times 10^{-4}$. (b) $T = 0.01$.  $h =  10^{-3}$.
(c)  $T = 0.1$.  $h= 10^{-3}$. (d)  $T = 0.1$.  $h=  10^{-3}$.
}
\label{stable}
\end{figure}
{\it Example 2}. (Stability verification of the newly proposed schemes for the standard CH equation.)
The computational domain is set as $\Omega=(0,\,2\pi)$, which is covered by a  uniform mesh with mesh size being $\pi/250$. 
The parameter $\epsilon = 0.1$,  and $\gamma=1$. 
In the relaxed DVD scheme \eqref{system-4-sav}, $\bar\phi_{n+1/2}=\frac 3 2 \phi_n-\frac 1 2\phi_{n-1}$.
The initial condition is $\phi(x,\,0)=0.2\sin x$. 
The numerical solutions at $T=0.01$ are plotted in \cref{stable} (a)-(b). 
The implicit scheme Sch-1 can get an accurate solution with time step size $h=5\times10^{-4}$, while the other second-order semi-implicit schemes have less accurate solutions.
With a large time step size $h=10^{-3}$, the high-order DVD scheme (Sch-2, Sch-3, Sch-4) can get accurate solutions, while the second-order implicit/semi-implicit schemes are inaccurate.
The numerical solutions at $T=0.1$ are plotted in \cref{stable} (c)-(d). 
For a large time step size $h=10^{-3}$, we find that the four implicit schemes (Sch-1, ..., Sch-4) are unconditionally stable and the semi-implicit schemes (R-DVD-1 and SAV) are stable with a suitable parameter $\beta$.
The evolutions of the modified total free energy for the semi-implicit schemes (R-DVD-1 and SAV) are given in \cref{energy}-(a), which validates the unconditional (modified) energy stability of the semi-implicit schemes.
However,  the semi-implicit schemes do not satisfy the (original) energy dissipation law, as shown in \cref{energy}-(b).
The dynamic processes of the high-order implicit schemes (Sch-2, Sch-3, Sch-4) agree well with the reference dynamic process obtained by the SAV/CN scheme with a small time step size $h=0.00002$.
In contrast, the results obtained by the second-order schemes are visibly different from the reference dynamic process.

\begin{figure}[h]
\hspace{0.0\textwidth} \subfloat[]{
\includegraphics[width=0.48\textwidth]{./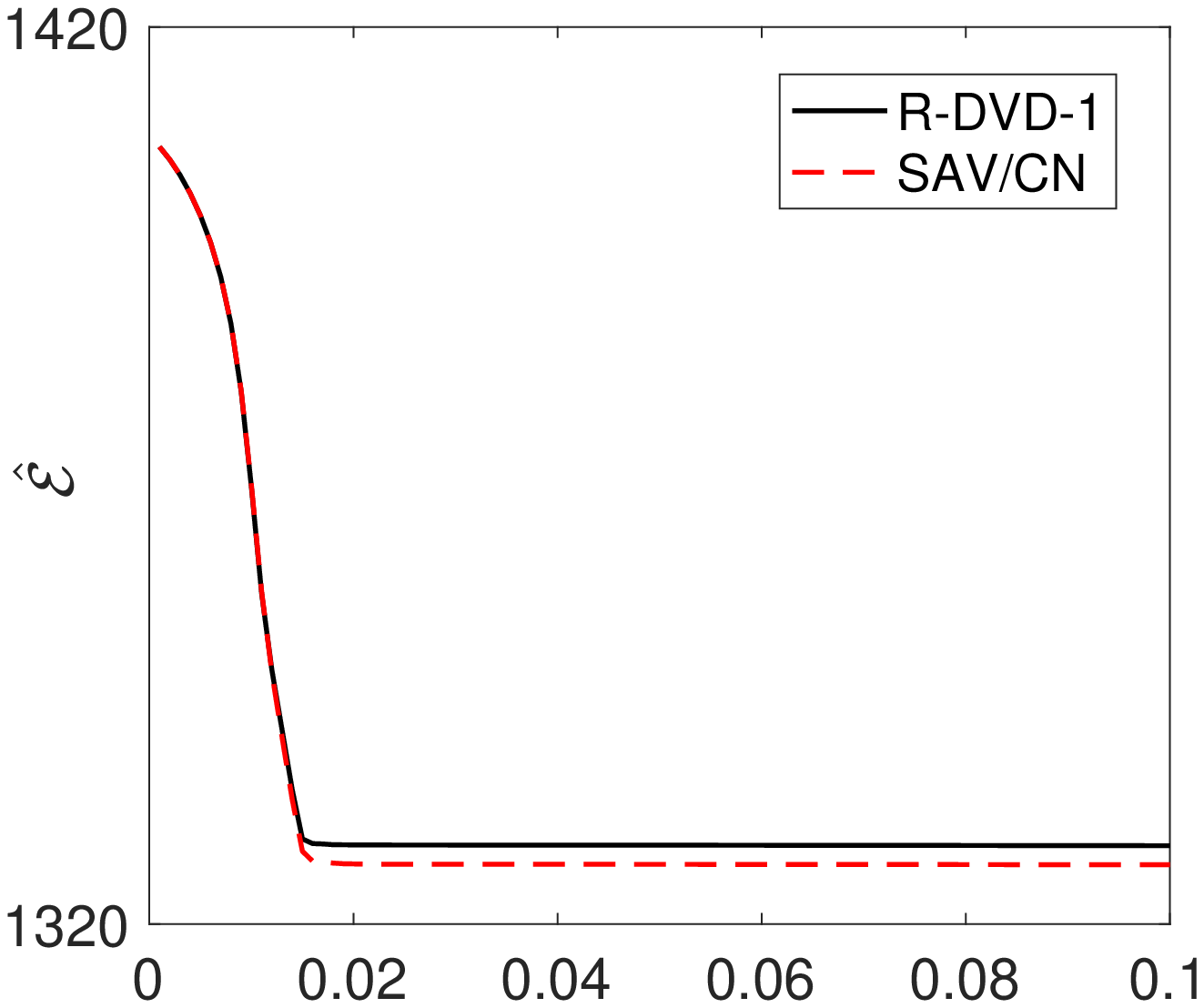}}
\hspace{0.0\textwidth} \subfloat[]{
\includegraphics[width=0.48\textwidth]{./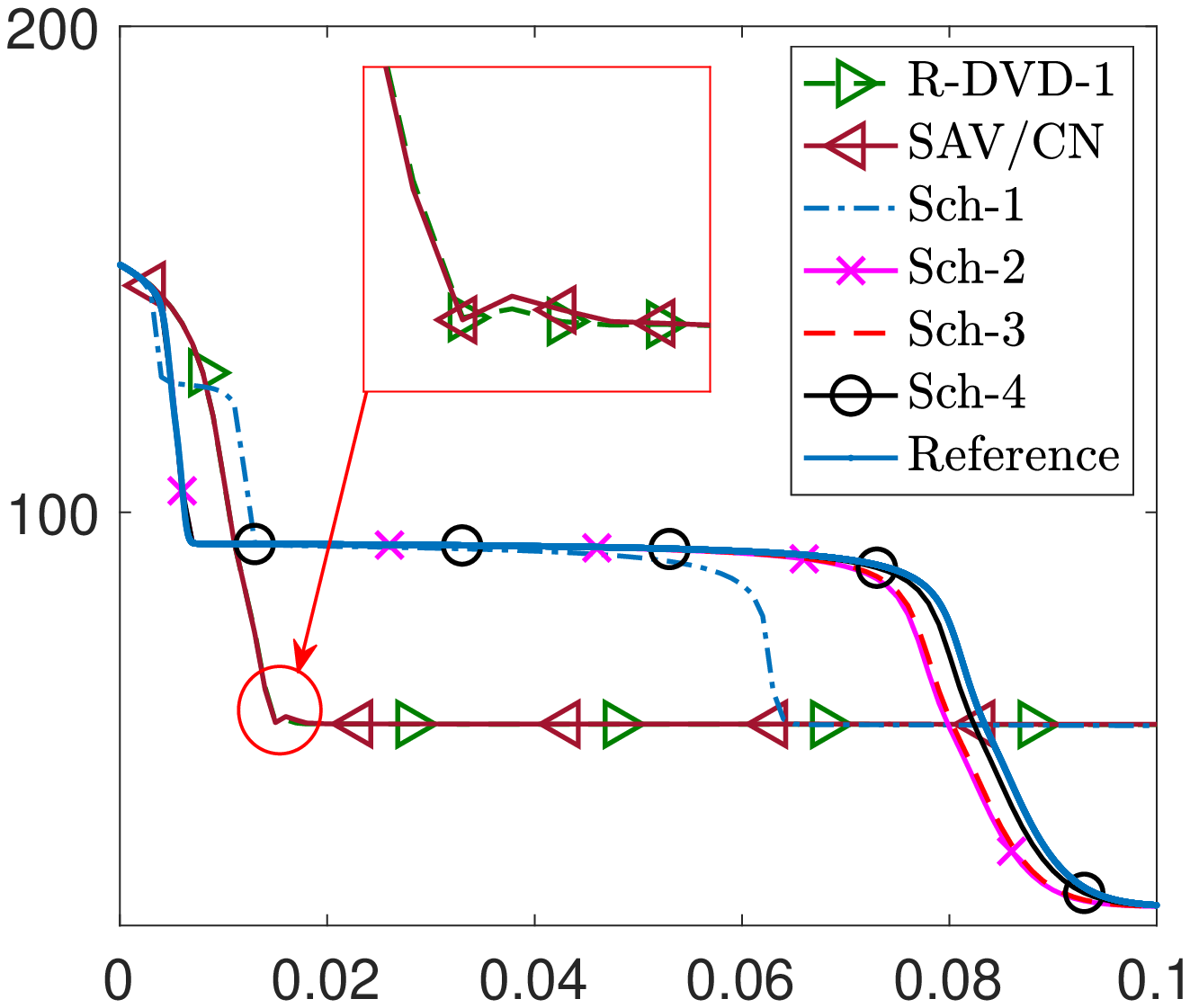}}
\caption{ Time step size $h=0.001$ and $\beta=2$. Here reference solution is obtained by the SAV/CN scheme with a small time step size $h=0.00002$. (a) Modified total free energy $\hat{\cal E}$. (b) Original total free energy ${\cal E}$. 
}
\label{energy}
\end{figure}

\subsection{Allen--Cahn equation}

{\it Example 3}. This example is a benchmark problem for the AC equation (see \cite{chen1998applications}).
The computational domain is chosen as $\Omega=(-128,\,128)^2$, which is covered by a $1024\times 1024$ uniform mesh. After mapping the computational domain to $(-1,1)^2$, the mobility parameter $\gamma=6.10351 \times 10^{-5}$ 
and $\epsilon = 0.0078$. Initially, a circle of radius $R_0=100$ is located at the center of the computational domain. The initial condition is given by
\begin{equation}
		\phi(x_1,x_2,0) = \left\{
	\begin{aligned} &1,\quad x_1^2+x_2^2< R_0^2,\\
	&-1,\quad x_1^2+x_2^2\geq R_0^2.
	\end{aligned}
	\right.
	\end{equation}
	
	In the sharp interface limit ($\epsilon \rightarrow 0$, which is suitable for $\epsilon = 0.0078$), the radius of the circle at  time $t$ is given by
	\begin{equation}
		R(t)=\sqrt{R_0^2 -{2t}}.
	\end{equation}
	The computed radius $R(t)$ obtained by using the linear schemes (R-DVD-1 or SAV/CN) is given in \cref{radius}-(a), which keeps monotonically decreasing and is very close to the sharp interface limit value.
	Correspondingly, the computed radius $R(t)$ get by using the nonlinear schemes (Sch-1, ..., Sch-4) is given in \cref{radius}-(b), which shows that the high-order schemes can obtain similar results by using a large time step size, such as $h=4$.
	
	\begin{figure}[H]
\hspace{0.0\textwidth} \subfloat[]{
\includegraphics[width=0.48\textwidth]{./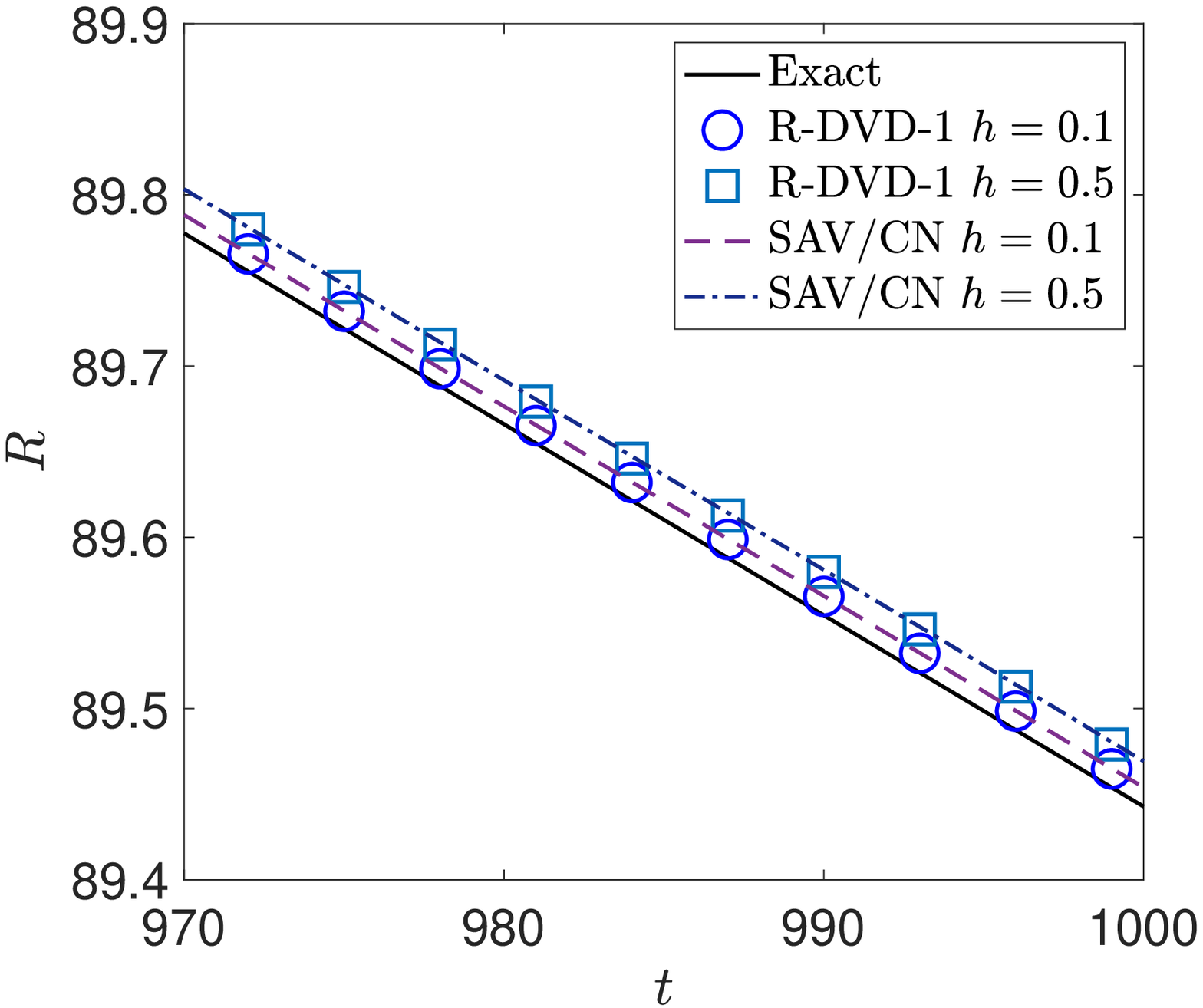}}
\hspace{0.0\textwidth} \subfloat[]{
\includegraphics[width=0.48\textwidth]{./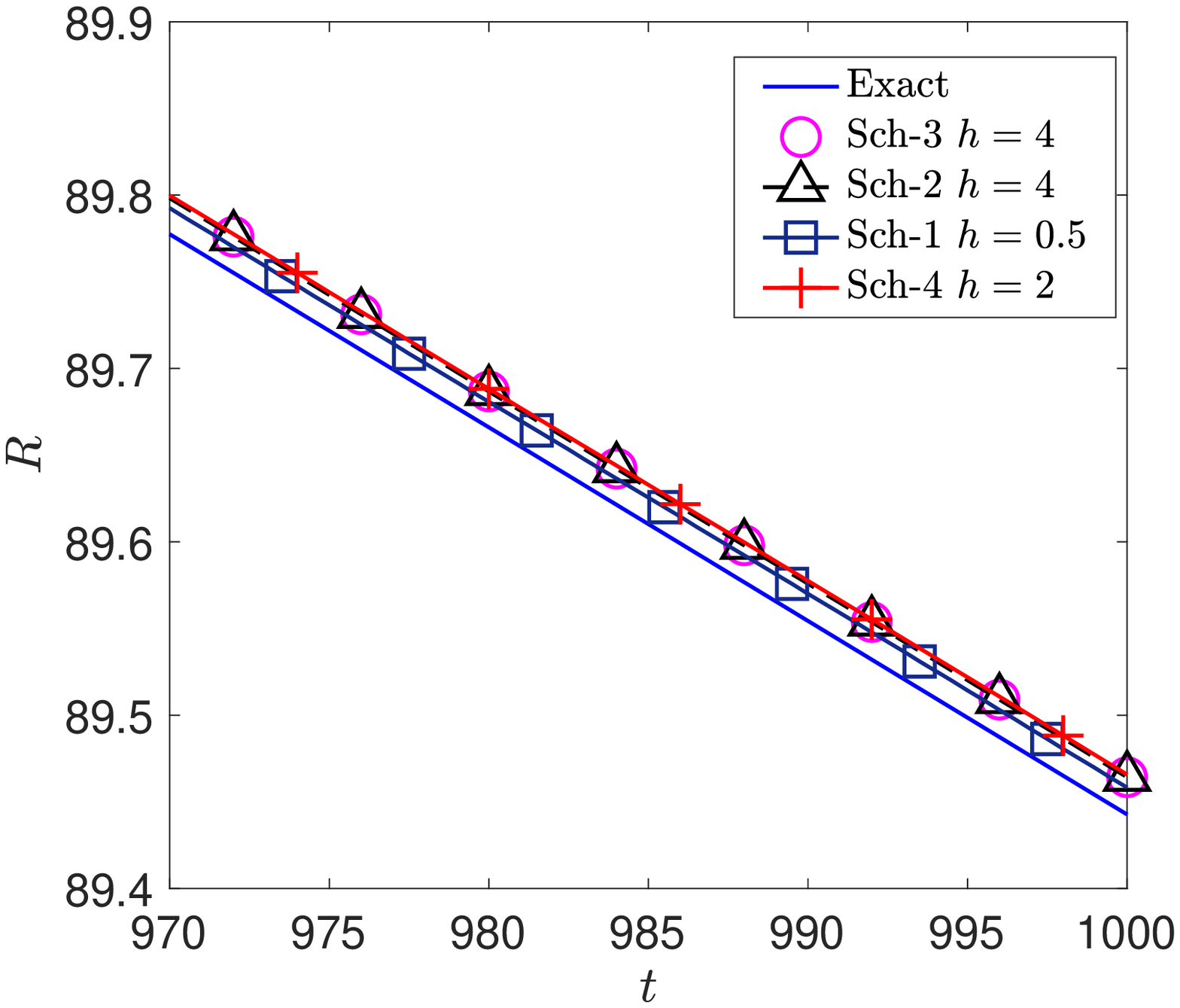}}
\caption{ The evolution of radius $R(t)$. 
}\label{radius}
\end{figure}

{\it Example 4}.
The computational domain is set as $(0,\,2\pi)$, which is covered by a uniform mesh with mesh size being $\pi/250$. 
The parameters in AC equation are taken as $\epsilon = 0.1$, and $\gamma=1$. 
The initial data is set to be a randomly distributed state
$\phi(\mathbf{x},0)=\delta _\phi$, where $\delta _\phi$  is a uniform random distribution function of $−0.02$ to $0.02$. 
The numerical solutions obtained by using SAV/CN with a sufficiently small  $h=0.005$ are taken as the reference solutions.
We plot the numerical solutions at $t=0.1$ and $t=1$ using the newly proposed schemes with a large time step size $h=0.01$ in \cref{randomresults}. 
The numerical results of all schemes agree well with the reference solution at $t=1$, but the solutions obtained by the semi-implicit schemes (R-DVD-1 and SAV/CN) have visible differences with the reference solution at $t=0.1$.
This example indicates that implicit schemes, especially high-order implicit schemes, are more accurate.

\section{Conclusion}

In this paper, we proposed a framework to construct an arbitrary high-order DVD scheme or relaxed DVD scheme for dealing with a large class of gradient flows.
By combining with the Runge--Kutta process, the arbitrary high-order DVD scheme is built by selecting a suitable definition for the discrete variational derivative.
A sufficient condition is introduced to verify the unconditional energy stability of the newly proposed scheme. 
We prove that the newly proposed arbitrary high-order DVD scheme is unconditionally stable to the original total free energy without any additional assumptions.
The newly proposed arbitrary high-order DVD scheme leads to a nonlinear algebraic system when fully discretized, which is more complex and less efficient than a linear scheme.
To overcome this difficulty, several relaxed DVD schemes are constructed by combining ideas from existing linear approaches, such as IEQ and SAV. 
The relaxed DVD scheme is unconditionally stable to a modified total free energy.
For linear schemes, we recommend readers to use the R-DVD-1 scheme, which is a second-order scheme and is more efficient than the IEQ/CN and SAV/CN scheme.
	\begin{figure}[H]
\hspace{0.0\textwidth} \subfloat[R-DVD-1, $t=0.1$]{
\includegraphics[width=0.305\textwidth]{./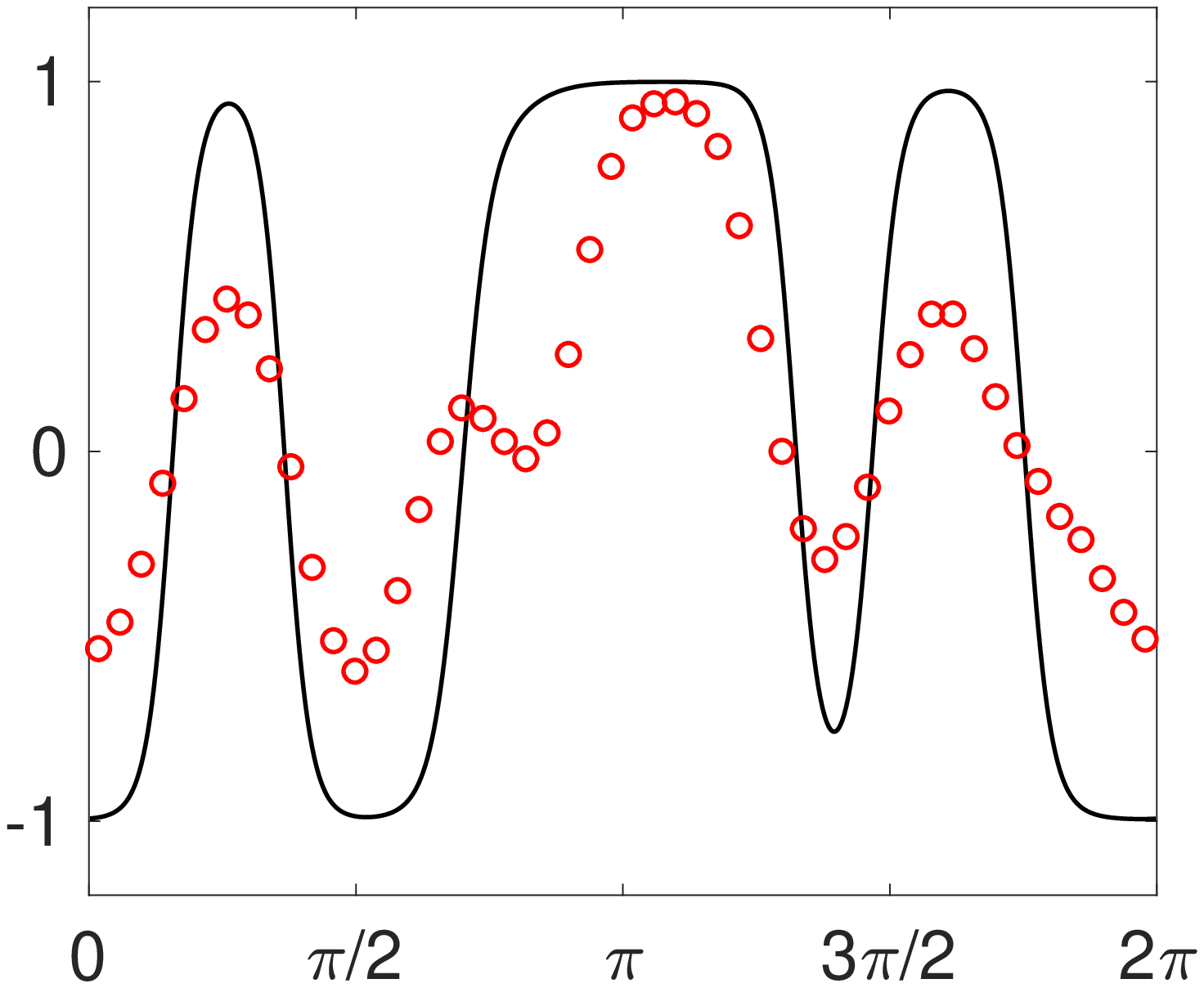}}
\hspace{0.0\textwidth} \subfloat[SAV/CN, $t=0.1$]{
\includegraphics[width=0.305\textwidth]{./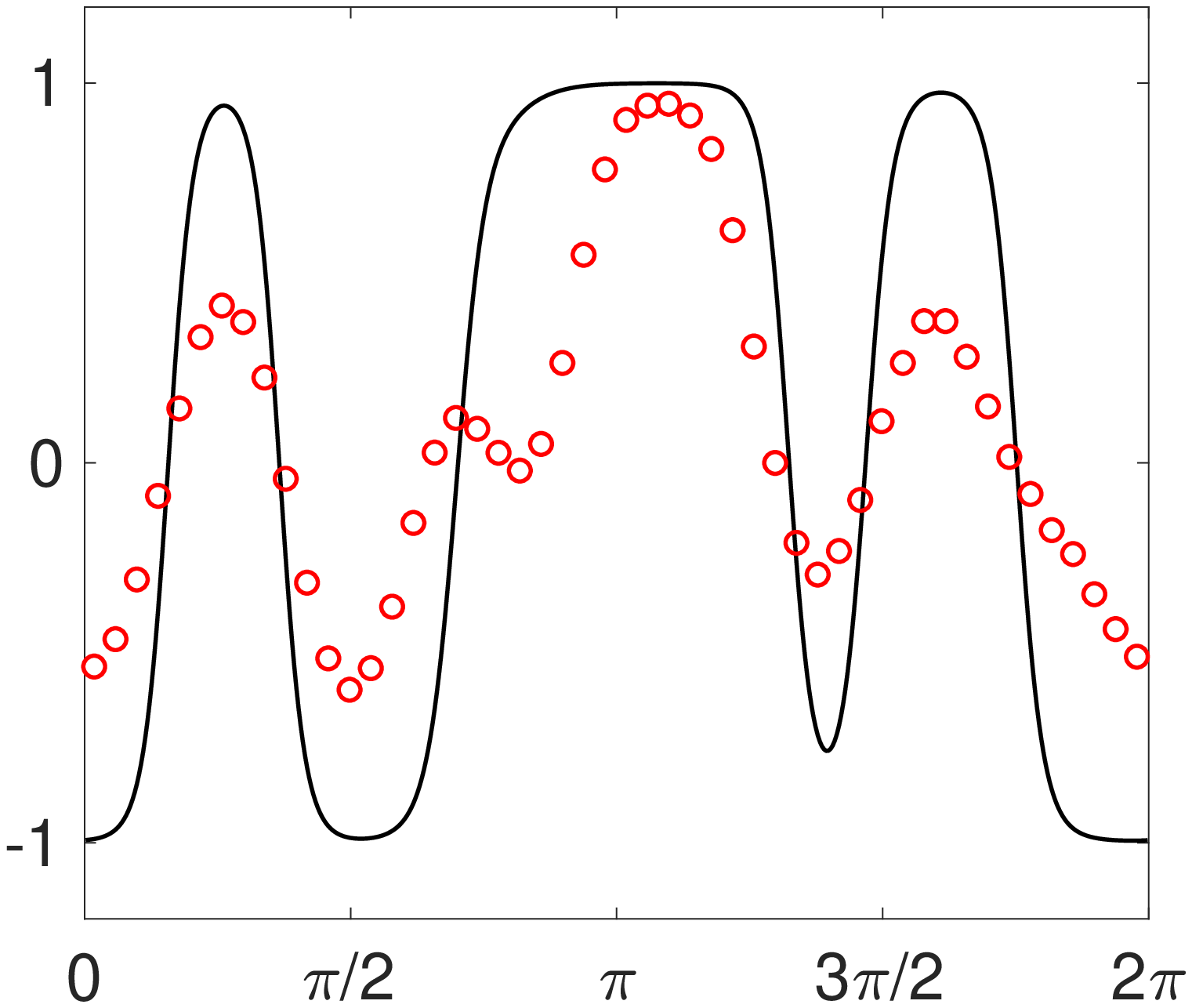}}
\hspace{0.0\textwidth} \subfloat[Sch-1, $t=0.1$]{
\includegraphics[width=0.305\textwidth]{./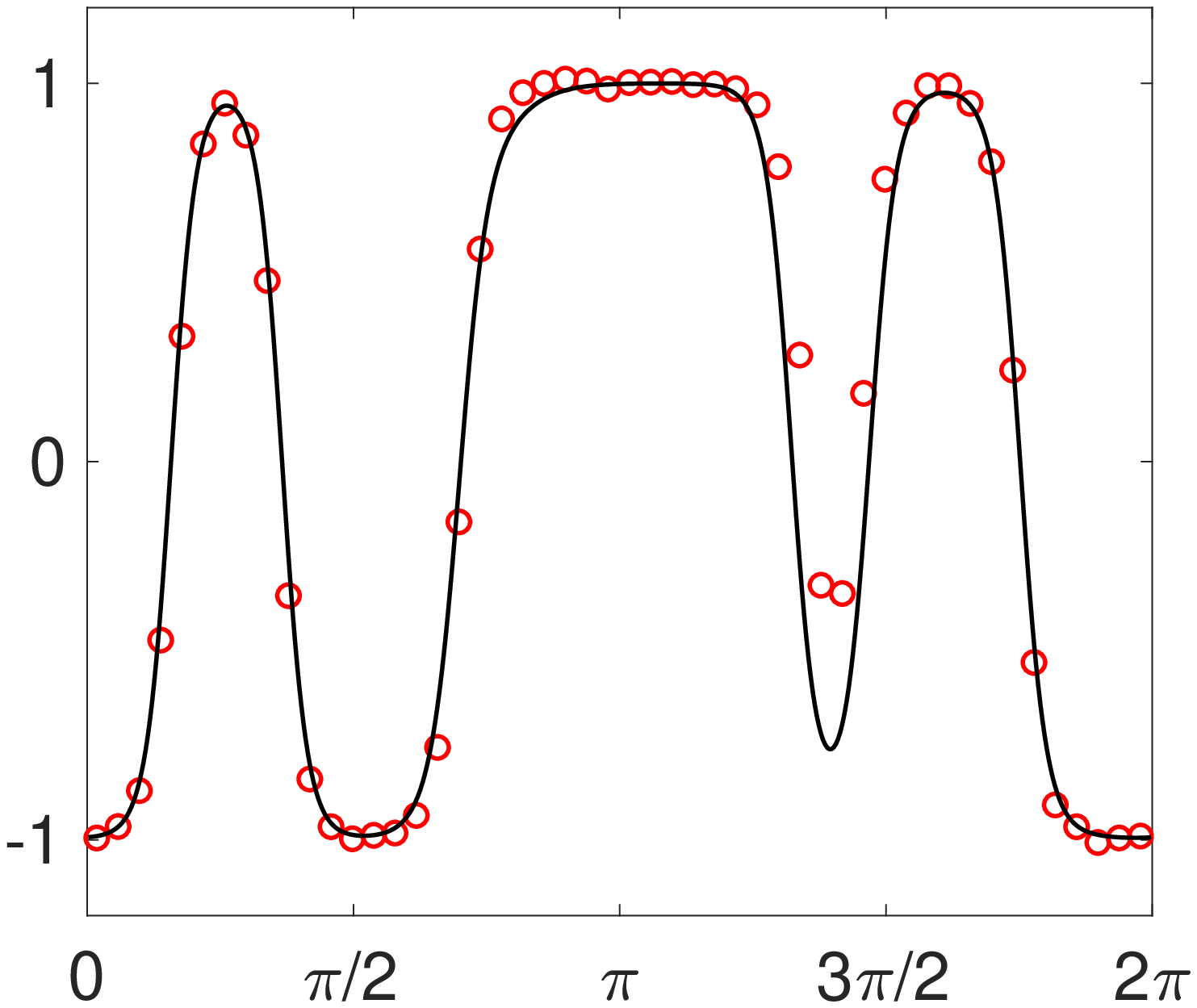}}
\hspace{0.0\textwidth} \subfloat[Sch-2, $t=0.1$]{
\includegraphics[width=0.305\textwidth]{./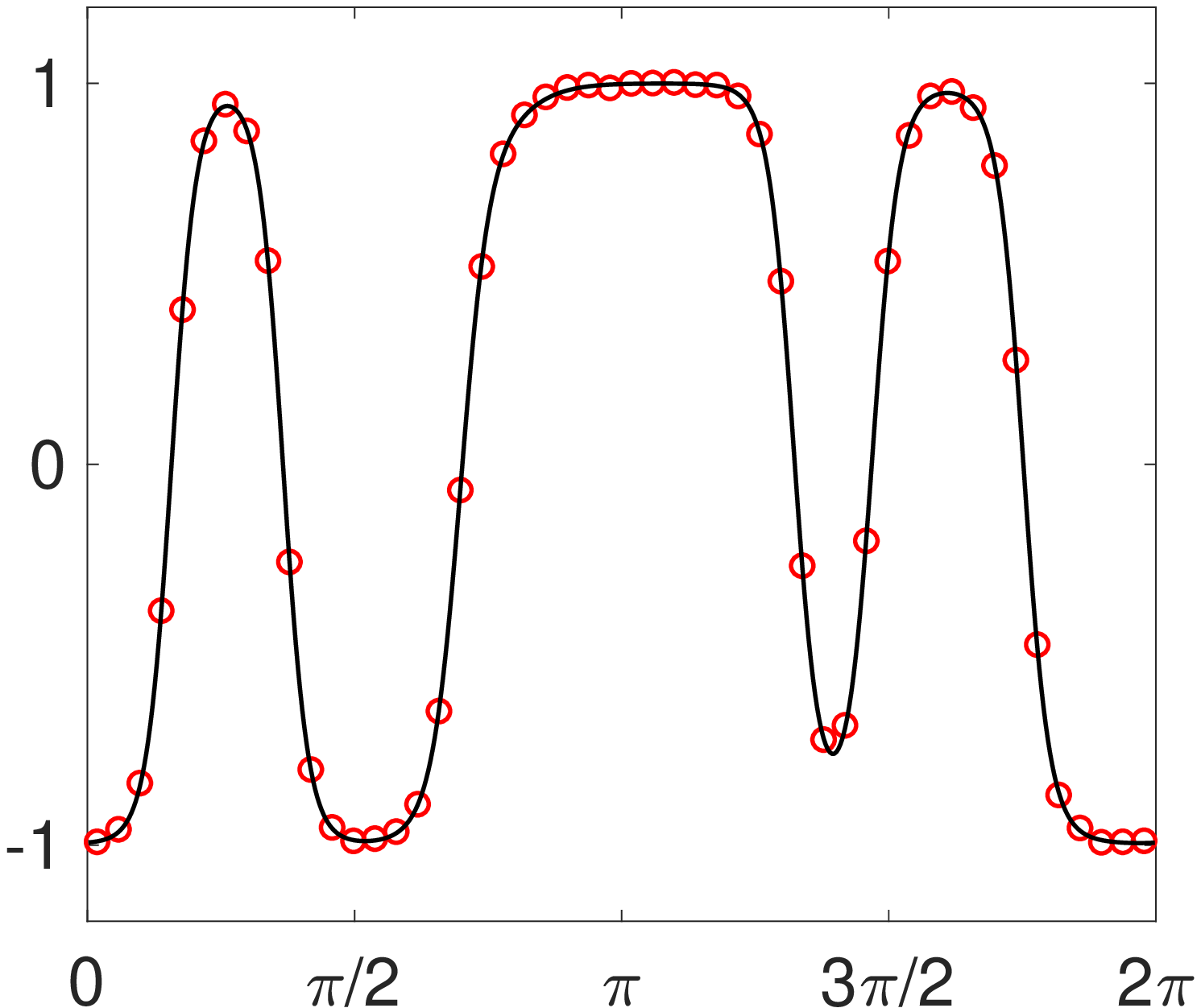}}
\hspace{0.0\textwidth} \subfloat[Sch-3, $t=0.1$]{
\includegraphics[width=0.305\textwidth]{./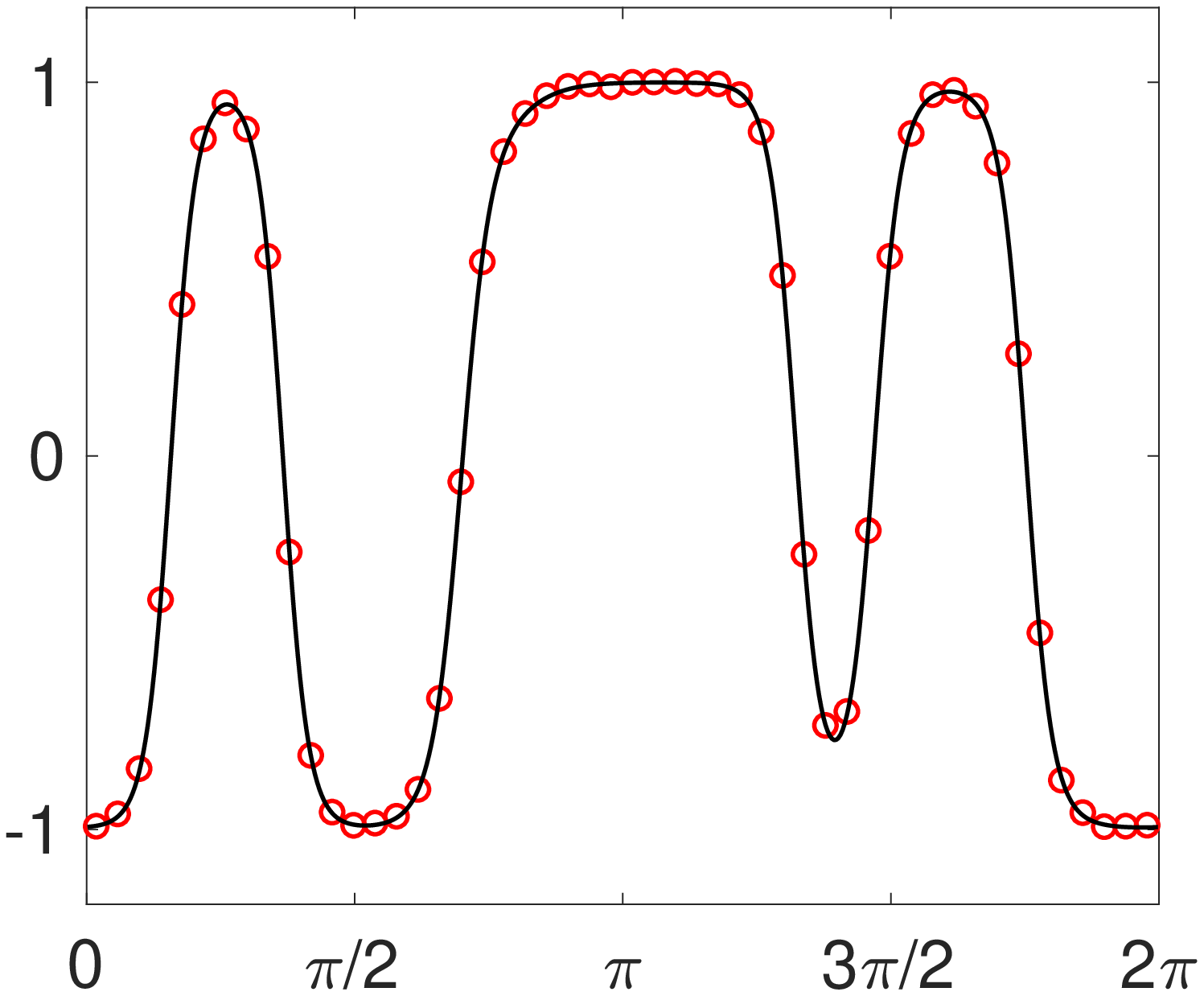}}
\hspace{0.0\textwidth} \subfloat[Sch-4, $t=0.1$]{
\includegraphics[width=0.305\textwidth]{./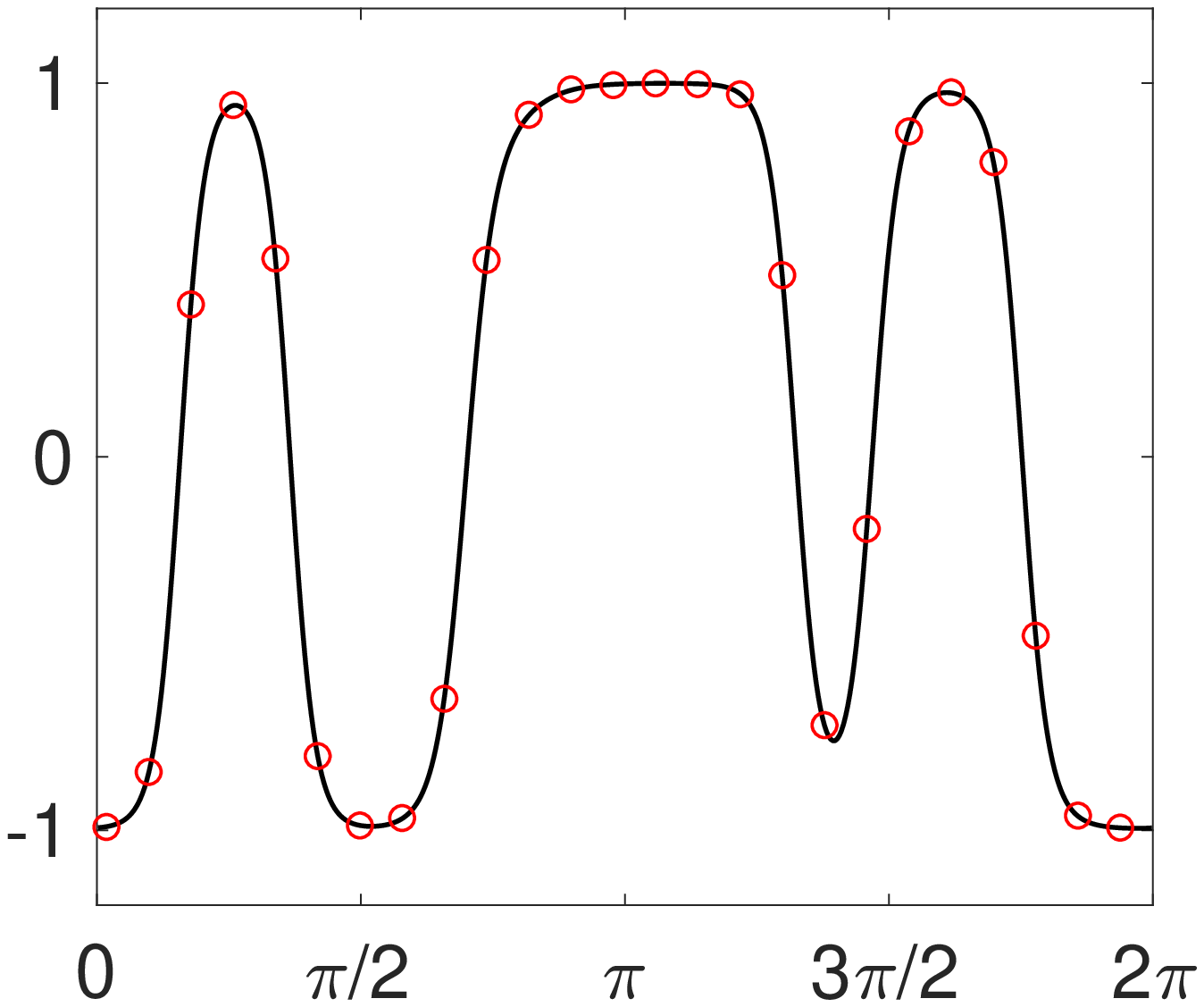}}
\hspace{0.0\textwidth} \subfloat[R-DVD-1, $t=1$]{
\includegraphics[width=0.305\textwidth]{./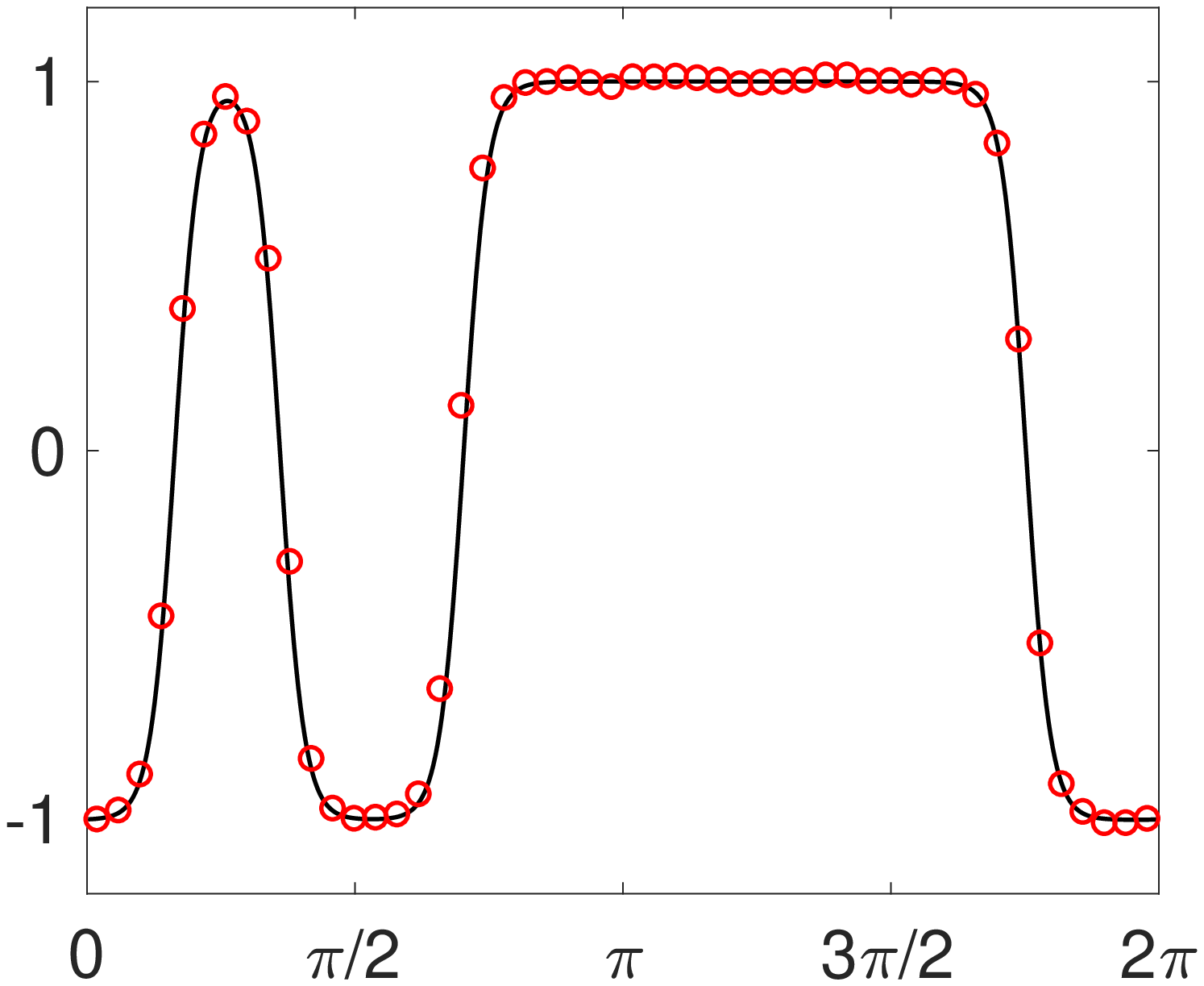}}
\hspace{0.0\textwidth} \subfloat[SAV/CN, $t=1$]{
\includegraphics[width=0.305\textwidth]{./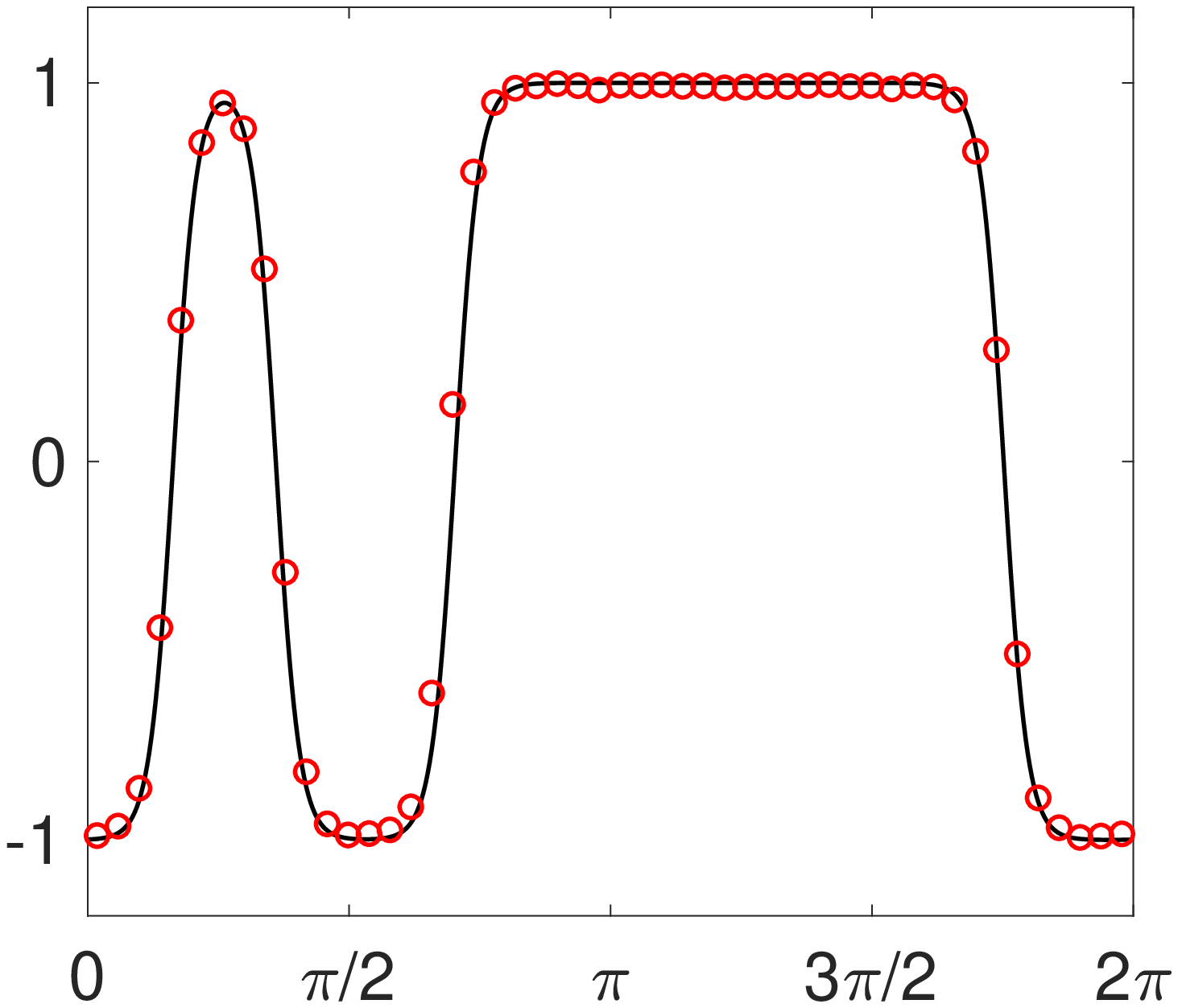}}
\hspace{0.0\textwidth} \subfloat[Sch-1, $t=1$]{
\includegraphics[width=0.305\textwidth]{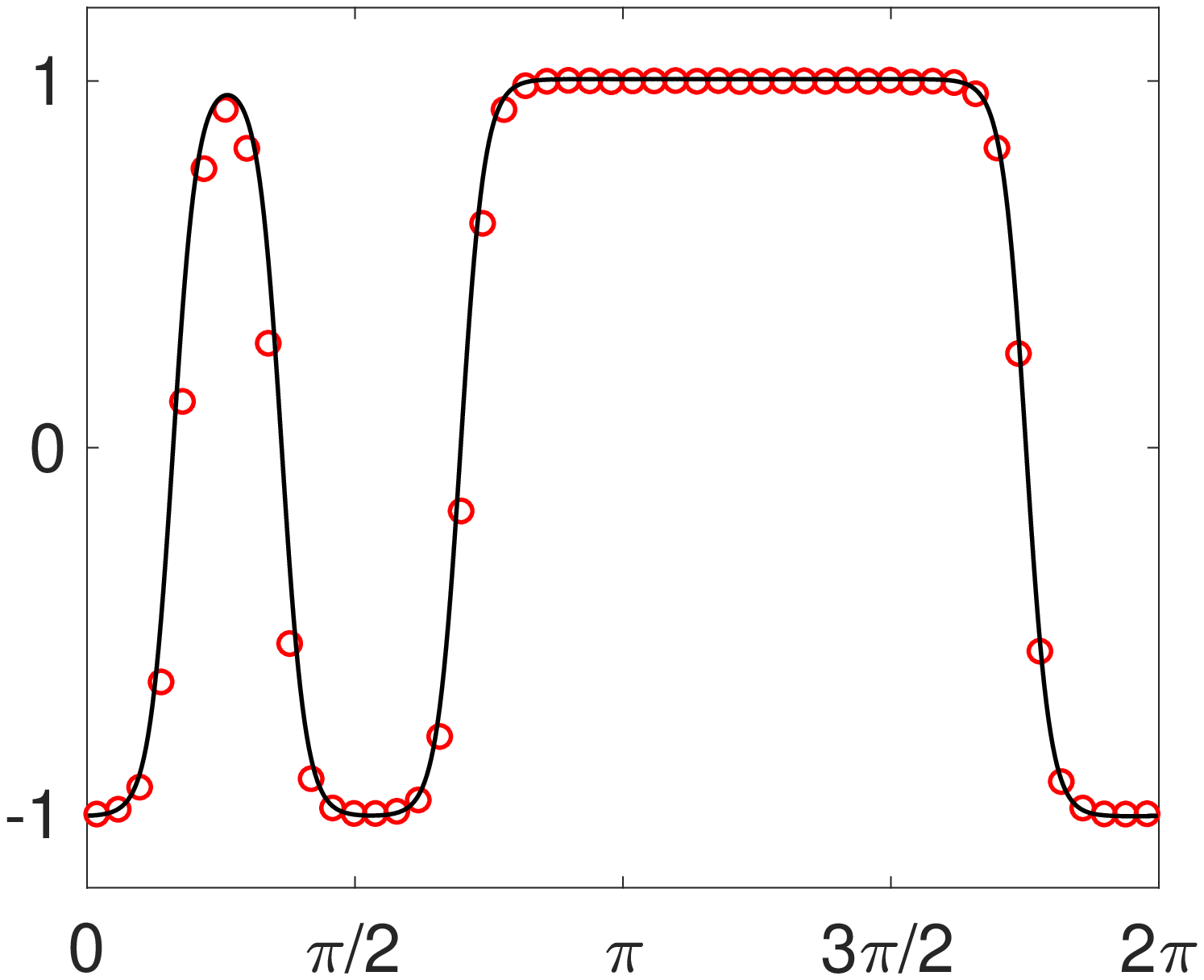}}
\hspace{0.0\textwidth} \subfloat[Sch-2, $t=1$]{
\includegraphics[width=0.305\textwidth]{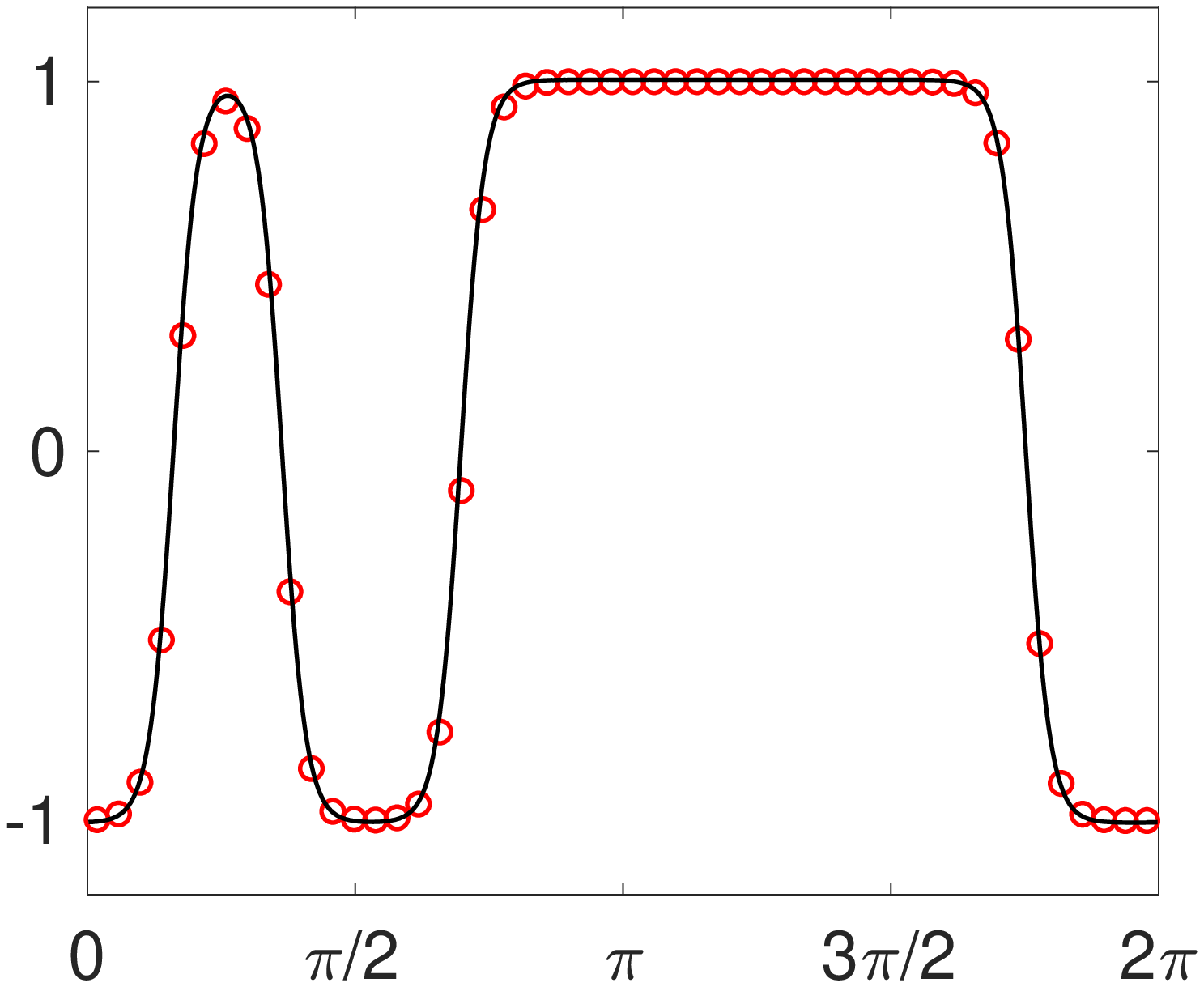}}
\hspace{0.0\textwidth} \subfloat[Sch-3, $t=1$]{
\includegraphics[width=0.305\textwidth]{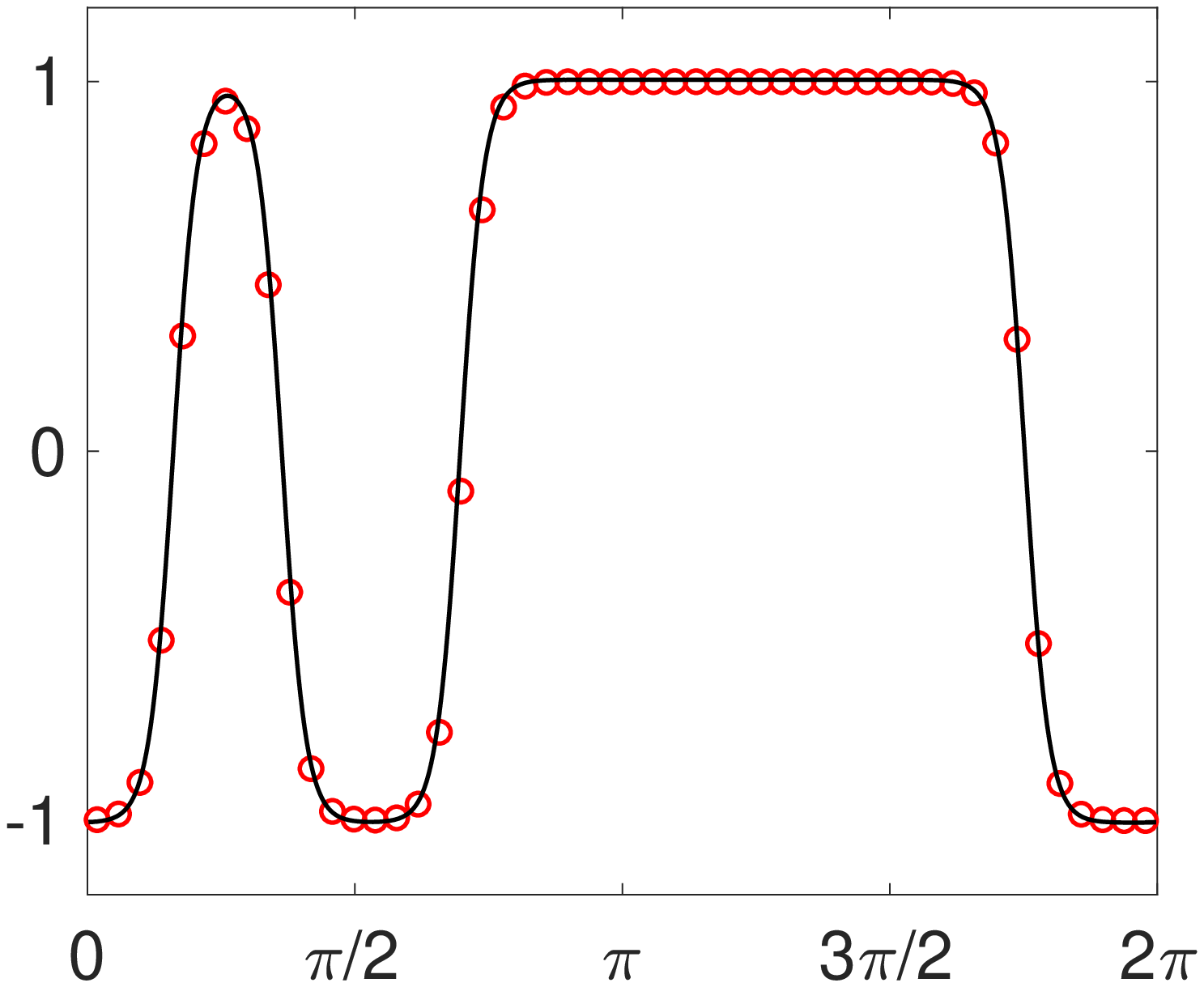}}
\hspace{0.0\textwidth} \subfloat[Sch-4, $t=1$]{
\includegraphics[width=0.305\textwidth]{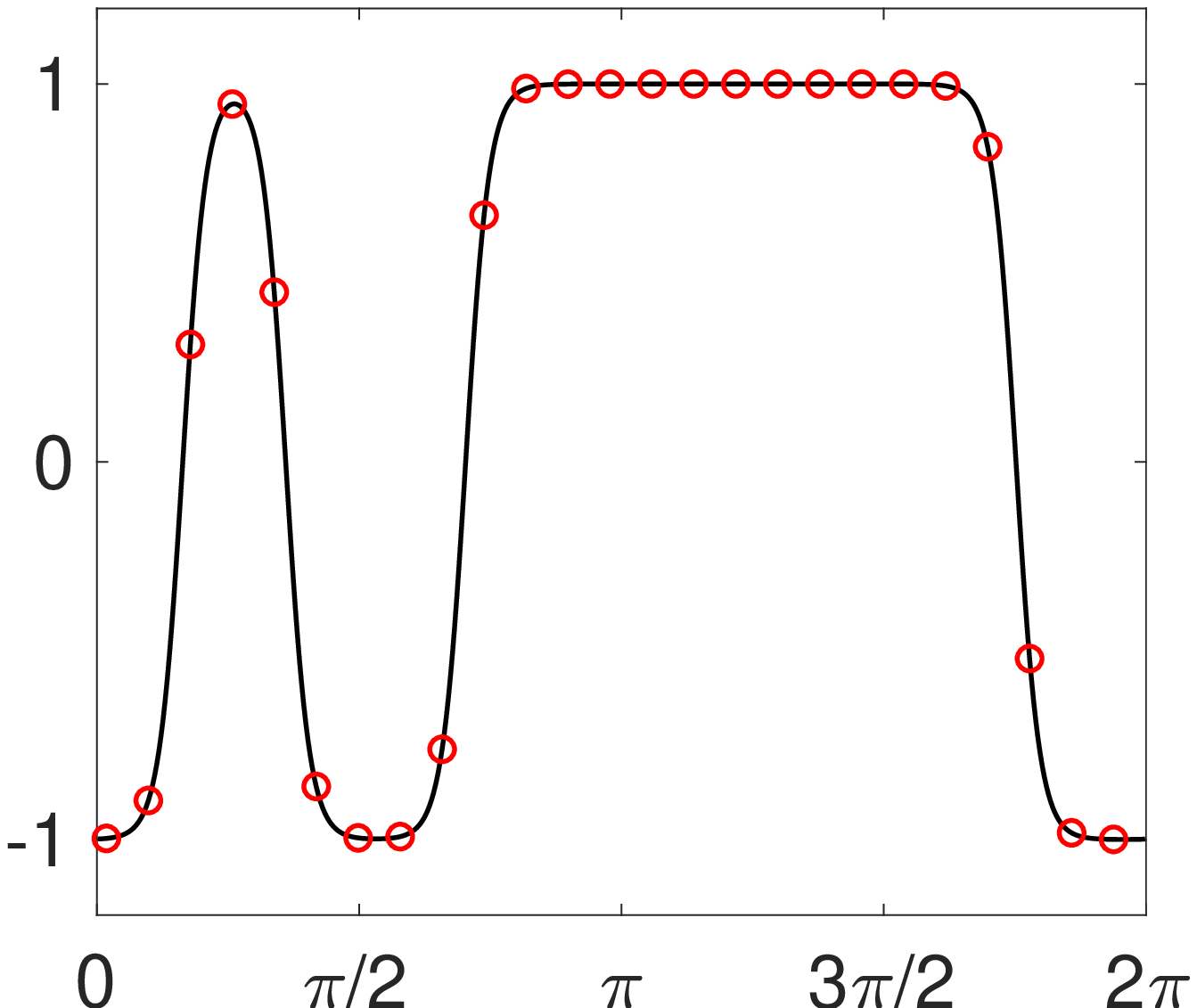}}
\caption{ Comparison of the six schemes. The red circles represent solutions obtained by the given schemes with a large time step size $h=0.01$, while the black solid lines represent the reference solutions.
}\label{randomresults}
\end{figure}

\section*{Appendix A}\label{Appendix-A}
In this appendix, we take the scheme Sch-4 as an example to illustrate how to choose the numerical constants $\tilde{a}_{ij}$.
For $\nu=3$, the constants $c_i$ are given as $c_0=0$, $c_1=1/3$, $c_2=2/3$, and $c_3=1$. The approximate solutions at $t=t_0+c_ih$ are denoted as $\phi_0$, $\phi_1$, $\phi_2$, and $\phi_3$.
The distribution of the discrete variational derivative $\mu[\phi_i,\phi_j]$ is shown in \cref{points}. 
We choose $\mu[\phi_3,\phi_0]$, $\mu[\phi_1,\phi_0]$, $\mu[\phi_2,\phi_1]$, and $\mu[\phi_3,\phi_2]$ to construct a fourth-order numerical integral for $\int_{0}^{h} \varphi dt$. 
The corresponding numerical constants are $\tilde a_{31}= \tilde a_{34}=\tilde a_{36}=3/8$, $\tilde a_{33}=-1/8$, and $\tilde a_{32}=\tilde a_{35}=0$.

For $t=1/6h,\,1/2h$, and $5/6h$, we calculate the Lagrange polynomial basic function $l_i(t)$.  The weight coefficients for the integral $\int_0^{1/3h} \varphi dt$, $\int_{0}^{2/3h} \varphi dt$, and $\int_{0}^{h} \varphi dt$ are given in the following table. 

			\begin{equation}
		\renewcommand{\arraystretch}{1.8}
	\begin{array}{ccc|c}
 	\mu[\phi_1,\phi_0]  & \mu[\phi_2,\phi_1] &\mu[\phi_3,\phi_2] & \\
 	\frac {25}{72}& -\frac 1 {36}& \frac 1 {72} & \int_0^{1/3h} \varphi dt \\
 	\frac {13} {36} & \frac{5}{18} & \frac 1{36}& \int_{0}^{2/3h} \varphi dt \\
 	\frac 3 {8} & \frac{1}{4} & \frac {3}{8}& \int_{0}^{h} \varphi dt.  
	\end{array}
	\end{equation}
	
	To obtain a fourth-order scheme satisfying the sufficient condition in \cref{theorem:0}, the numerical constants $\tilde{a}_{ij}$ for the scheme Sch-4  are finally taken as 
				\begin{equation}
		\renewcommand{\arraystretch}{1.5}
		\hbox{Sch-4},\qquad(\nu = 3,\,p=4),  \qquad
	\begin{array}{cccccc|c}
 	\frac{25}{72}& 0& - \frac 1 {24}&\frac {1} {72}  & 0 &\frac 1 {72}  & \frac 1 3 \\
 	 \frac {13}{ 36}&0  &-\frac 1{12}&\frac {13}{36}& 0 & \frac 1 {36} & \frac 2 3 \\
 	 \frac 3 8& 0 &-\frac 1 8&\frac 3 8 & 0 & \frac 3 8& 1. 
	\end{array}
	\end{equation}
	Here $(0,\,0,-\frac 1 {24}, \frac 1 {24}, 0, 0,)^T$ and  $(0,\,0,-\frac 1 {12}, \frac 1 {12}, 0, 0,)^T$  are added in the first 
	row and the second row, respectively.

	\begin{figure}[H]
	\centering{
\includegraphics[width=0.68\textwidth]{./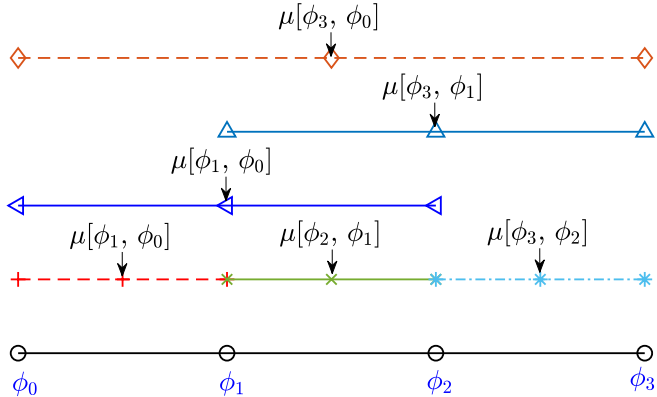}}
\caption{ Distribution of the discrete variational derivative $\mu[\phi_i,\phi_j]$. 
}\label{points}
\end{figure}



\end{document}